\newtheorem{theorem}{Theorem}[section]
\newtheorem{cor}[theorem]{Corollary}
\newtheorem{lemma}[theorem]{Lemma}
\newtheorem{prop}[theorem]{Proposition}
\newtheorem{defi}[theorem]{Definition}
\newtheorem{remark}[theorem]{Remark}
\theoremstyle{definition}
\newtheorem{example}[theorem]{Example}
\newcommand{\R}{{\mathbb R}}
\newcommand{\N}{{\mathbb N}}
\numberwithin{equation}{section}
\title[Inclusion between Stummel classes and other function spaces]
{Inclusion between generalized Stummel classes and other function spaces}
\author[Nicky K.~Tumalun, Denny I. Hakim, and Hendra Gunawan]{Nicky K.~Tumalun$^1$, Denny I. Hakim$^2$, and Hendra Gunawan$^3$
\vspace{0.1 cm}
\\
Analysis and Geometry Group\\ Bandung Institute of Technology\\
Jl. Ganesha No. 10, Bandung 40132, Indonesia
\vspace{0.1 cm}
\\
E\lowercase{mail: $^1$nickytumalun@yahoo.co.id, $^2$dennyivanalhakim@gmail.com, $^3$hgunawan@math.itb.ac.id}
}
\begin{document}

\maketitle

\begin{abstract}
We refine the definition of generalized Stummel classes and study inclusion properties
of these classes. We also study the inclusion relation between Stummel classes and other
function spaces such as generalized Morrey spaces, weak Morrey spaces, and Lorentz spaces. In addition, we show that these inclusions are proper. Our results extend some previous results in \cite{CRR, RZ}.

\noindent {\bf Key words:} Generalized Stummel classes, generalized Morrey spaces, generalized weak Morrey spaces, Lorentz spaces.

\noindent {\bf MSC (2010):} 42B35, 46E30
\end{abstract}

\section{Introduction}

The definition of Stummel class was introduced in \cite{DH, RZ}. For $0<\alpha<n$,
the \textit{Stummel class} $S_{\alpha}=S_\alpha(\R^n)$ is defined by
\begin{align*}
S_{\alpha}:=  \left\{ f\in L_{\rm loc}^{1}(\R^n): \eta_{\alpha}f(r) \searrow 0
\quad \text{for} \quad r \searrow 0 \right\},
\end{align*}
where
\begin{align*}
\eta_{\alpha}f(r) := \sup_{x \in \R^{n}} \int_{|x-y|<r} \frac{|f(y)|}{|x-y|^{n-\alpha}} \,dy,
\quad r>0.
\end{align*}
For $\alpha=2$, $S_{2}$ is known as the {\it Stummel-Kato class}. Knowledge of Stummel classes
is important when one is studying the regularity properties of the solutions of some partial
differential equations (see \cite{AS, CRR, CFG, Fazio, M}).

In the mean time, the study of Morrey spaces, which were introduced by C.~B.~Morrey in \cite{Mor},
has attracted many researchers, especially in the last two decades. For $1 \leq p < \infty$ and
$0\le\lambda \le n$, the {\it Morrey space} $L^{p,\lambda}=L^{p,\lambda}(\R^n)$ is defined to be
the collection of all functions $f\in L_{\rm loc}^p(\R^n)$ for which
\begin{align*}
\|f\|_{L^{p,\lambda}} := \sup_{x\in\R^n,\,r>0} r^{-\frac{\lambda}{p}}\| f \|_{L^{p}(B(x,r))}
< \infty,
\end{align*}
where $ B(x,r) := \{ y \in \R^{n} : |x-y|<r \}$ and
\begin{equation*}
\| f \|_{L^{p}(B(x,r))} := \left( \int_{|x-y|<r} |f(y)|^{p} \,dy\right)^{\frac{1}{p}}.
\end{equation*}
Note that $L^{p,0}=L^p$.
As shown in \cite{RZ}, one may observe that $L^{1,\lambda} \subseteq S_{\alpha}$ provided that
$n - \lambda < \alpha < n$. (For the case $\alpha=2$, this fact was proved in \cite{Fazio}.)
Conversely, if $V \in S_{\alpha}$ for $0<\alpha<n$ and $\eta_{\alpha}f(r) \sim r^{\sigma}$ for
some $\sigma > 0$, then $V \in L^{1,n-\alpha+\sigma}$.

Eridani and Gunawan \cite{EG} developed the concept of generalized Stummel classes and studied
the inclusion relation between these classes and generalized Morrey spaces. For $1\le p<\infty$
and a measurable function $\Psi: (0,\infty) \rightarrow (0,\infty)$, the {\it generalized
Morrey space} $L^{p,\Psi}=L^{p,\Psi}(\R^n)$ is the collection of all
functions $f \in L^{p}_{\rm loc}(\R^{n})$ for which
\begin{align*}
\| f \|_{L^{p,\Psi}} := \sup_{x\in\R^n,\,r>0} \frac{|B(x,r)|^{-\frac{1}{p}}}{\Psi(r)}
\| f \|_{L^{p}(B(x,r))} < \infty,
\end{align*}
where $|B(x,r)|$ denotes the Lebesgue measure of $B(x,r)$. Observe that, for $\Psi(t) :=
t^{\frac{\lambda -n}{p}}$ ($0 \leq \lambda \leq n$),
we have $L^{p,\Psi} = L^{p,\lambda}$. Further works on the inclusion relation between
generalized Stummel classes and Morrey spaces can be found in \cite{G, S}.

The purpose of this paper is to refine the definition of generalized Stummel classes and
study the inclusion relation between these classes. We also study the inclusion relation
between Stummel classes and Morrey spaces using assumptions that are different from the
assumptions used in \cite{EG, G, S}. We give an example of a function which belongs to the
generalized Stummel class but not to the generalized Morrey space. Furthermore, we
prove that the Stummel class contains weak Morrey spaces under certain conditions.
For $1\le p<\infty$ and $0 \leq \lambda \leq n $, the {\it weak Morrey space}
$wL^{p,\lambda}=wL^{p,\lambda}(\R^n) $ is the collection of all
Lebesgue measurable functions $f$ on $ \R^{n} $ which satisfy
\begin{align*}
\| f \|_{wL^{p,\lambda}} := \sup_{x\in\R^n,\,r>0} r^{-\frac{\lambda}{p}}
\| f \|_{wL^{p}(B(x,r))} < \infty,
\end{align*}
where
$$
\| f \|_{wL^{p}(B(x,r))} := \sup\limits_{t>0} t\left| \left\lbrace y \in B(x,r):
|f(y)|>t \right\rbrace  \right|^{\frac{1}{p}}.
$$
Observe that, by taking $\lambda=0$, we can recover the weak Lebesgue space $wL^p$.
In this paper, we also study the relation between Stummel classes and Lorentz spaces.

Throughout this paper we assume that $\Psi: ( 0,\infty) \rightarrow (0,\infty) $ is a
measurable function. Whenever required, we consider the following conditions on $\Psi$:

\begin{equation}\label{eq 1.1}
\int_{0}^{1} \frac{\Psi (t)}{t} dt < \infty;
\end{equation}
\begin{equation}\label{eq 1.2}
\frac{1}{A_{1}} \leq \frac{\Psi(s)}{\Psi(r)} \leq A_{1} \quad \text{for} \quad 1 \leq
\frac{s}{r} \leq 2;
\end{equation}
\begin{equation}\label{eq 1.3}
\frac{\Psi(r)}{r^{n}} \leq A_{2} \frac{\Psi(s)}{s^{n}} \quad \text{for} \quad s \leq r,
\end{equation}
where $ A_{i} > 0$, $ i = 1, 2 $, are independent of $r,s > 0$.
The condition (\ref{eq 1.2}) is known as the \textit{doubling condition} on $\Psi$.
In some cases, we can weaken the doubling condition by the {\it right doubling condition}:
\begin{equation}\label{eq 1.4}
\frac{\Psi(s)}{\Psi(r)} \leq A_{3} \quad \text{for} \quad 1 \leq \frac{s}{r} \leq 2,
\end{equation}
where $A_{3}$ is independent of $r, s > 0$.

In this paper, the constant $c>0$ that appears in the proof of all theorems may vary
from line to line, and the notation $c = c(\alpha, \beta, \dots, \zeta)$ indicates that
$c$ depends on $\alpha, \beta, \dots, \zeta$.

\section{The Generalized Stummel classes}

\begin{defi}\label{D1}
For $1 \leq p < \infty$, we define the \textbf{generalized Stummel $p$-class}
$S_{p,\Psi}=S_{p,\Psi}(\R^n)$ by
\begin{align*}
S_{p,\Psi}:=\left\{f\in L_{\rm loc}^{p}(\R^n):
\eta_{p,\Psi}f(r) \searrow 0 \quad \text{for} \quad r \searrow 0 \right\},
\end{align*}
where
\begin{align*}
\eta_{p,\Psi}f(r) := \sup_{x \in \R^{n}} \left( \int_{|x-y|<r}
\frac{|f(y)|^{p} \Psi(|x-y|)}{|x-y|^{n}} \, dy\right)^{\frac{1}{p}},\quad r>0.
\end{align*}
\end{defi}

We call $ \eta_{p,\Psi}f $ the {\it Stummel $p$-modulus} of $f$. Observe that the
Stummel $p$-modulus is nondecreasing on $(0,\infty)$. For $p = 1$, we have
$S_{1,\Psi} := S_{\Psi}$ --- the generalized Stummel class introduced in \cite{EG}.
For $\Psi(t) := t^{\alpha}\ (0 < \alpha < n)$, we write $S_{p,\alpha}$ instead
of $S_{p,\Psi}$ and $\eta_{p,\alpha}$ instead of $\eta_{p,\Psi}$. Observe that
$S_{1,\alpha} := S_{\alpha}$ --- the Stummel class introduced in \cite{DH, RZ}.

The following two propositions confirm that $\eta_{p,\Psi}f$ is continuous (hence
measurable) and satisfies the doubling condition.

\begin{prop}\label{pr 2.1}
If $f \in S_{p,\Psi}$, then $\eta_{p,\Psi}f$ is continuous on $(0,\infty)$.
\end{prop}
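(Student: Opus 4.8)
The plan is to show that $\eta_{p,\Psi}f$ has one-sided limits equal to its value at every point of $(0,\infty)$. Since $\eta_{p,\Psi}f$ is nondecreasing (as already observed), it is enough to prove right-continuity at each $r_0>0$, i.e. that $\eta_{p,\Psi}f(r)\to\eta_{p,\Psi}f(r_0)$ as $r\searrow r_0$; left-continuity will follow from a symmetric argument controlling the increment by an integral over a thin annulus. The natural device is the auxiliary ``localized'' modulus
\[
\eta_{p,\Psi}^{(x)}f(r):=\left(\int_{|x-y|<r}\frac{|f(y)|^p\Psi(|x-y|)}{|x-y|^n}\,dy\right)^{1/p},
\]
so that $\eta_{p,\Psi}f(r)=\sup_{x\in\R^n}\eta_{p,\Psi}^{(x)}f(r)$.

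First I would record the elementary identity, for $0<r<R$ and fixed $x$,
\[
\bigl(\eta_{p,\Psi}^{(x)}f(R)\bigr)^p-\bigl(\eta_{p,\Psi}^{(x)}f(r)\bigr)^p
=\int_{r\le|x-y|<R}\frac{|f(y)|^p\Psi(|x-y|)}{|x-y|^n}\,dy,
\]
and then estimate the annular integral on the right. Here I would split the annulus $r\le|x-y|<R$ into finitely many dyadic-type pieces and invoke the doubling (or right-doubling) condition \eqref{eq 1.2} on $\Psi$ to bound $\Psi(|x-y|)/|x-y|^n$ by a constant multiple of $\Psi(\rho)/\rho^n$ for a suitable reference radius $\rho$, so that the annular integral is dominated by a fixed constant times $\bigl(\eta_{p,\Psi}f(2R)\bigr)^p$ times a factor that measures the ``thinness'' of the annulus. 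Taking the supremum over $x$ and using the inequality $|a^{1/p}-b^{1/p}|\le|a-b|^{1/p}$ (valid for $a,b\ge 0$, $p\ge 1$) converts the control of the $p$-th powers into control of $\eta_{p,\Psi}f$ itself.

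The main obstacle, and where the hypothesis $f\in S_{p,\Psi}$ (rather than merely $f\in L^p_{\rm loc}$) enters, is showing that the annular integral tends to $0$ uniformly in $x$ as $R\searrow r$. Uniform smallness does not follow from absolute continuity of a single integral; instead I would exploit the defining property $\eta_{p,\Psi}f(\delta)\searrow 0$ as $\delta\searrow 0$ together with a translation trick: cover the annulus $\{r\le|x-y|<R\}$ by finitely many balls of radius $\varepsilon$ with centres $z_1,\dots,z_N$ (with $N$ depending on $r$ but not on $x$), so that each piece of the annular integral is at most $c\,\bigl(\eta_{p,\Psi}^{(z_j)}f(\varepsilon)\bigr)^p\le c\,\bigl(\eta_{p,\Psi}f(\varepsilon)\bigr)^p$ after absorbing the ratio $\Psi(|x-y|)/|x-y|^n$ into $\Psi(\varepsilon)/\varepsilon^n$ via \eqref{eq 1.3} or \eqref{eq 1.2}; choosing $\varepsilon$ small makes this arbitrarily small uniformly in $x$. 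This yields right-continuity; for left-continuity one runs the same estimate on the annulus $\{R-\varepsilon\le|x-y|<R\}$, again covered by boundedly many small balls, so the increment $\bigl(\eta_{p,\Psi}f(R)\bigr)^p-\bigl(\eta_{p,\Psi}f(R-\eta)\bigr)^p\to 0$ as $\eta\searrow 0$. Combining both one-sided statements gives continuity of $\eta_{p,\Psi}f$ on $(0,\infty)$.
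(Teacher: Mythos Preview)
Your approach differs substantially from the paper's and, as written, has two genuine gaps.

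First, Proposition~\ref{pr 2.1} carries no structural hypotheses on $\Psi$ beyond measurability, yet your annular estimate repeatedly invokes the doubling condition \eqref{eq 1.2} and the almost-decreasing condition \eqref{eq 1.3} in order to compare $\Psi(|x-y|)/|x-y|^{n}$ with $\Psi(|z_j-y|)/|z_j-y|^{n}$. Without one of these the weight transfer from the centre $x$ to the auxiliary centres $z_j$ is unjustified, so at best you prove a weaker statement than the one claimed.

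Second, and more seriously, the covering step does not deliver the uniform smallness you assert. You cover the annulus $\{r\le|x-y|<R\}$ by $N$ balls of radius $\varepsilon$ and bound the annular integral by $cN\bigl(\eta_{p,\Psi}f(\varepsilon)\bigr)^{p}$, then say that ``choosing $\varepsilon$ small makes this arbitrarily small uniformly in $x$.'' But $N$ depends on $\varepsilon$: a thin annulus of inner radius $r$ needs on the order of $(r/\varepsilon)^{n-1}$ balls of radius $\varepsilon$ to cover it. Hence the product $N\bigl(\eta_{p,\Psi}f(\varepsilon)\bigr)^{p}$ need not tend to $0$ as $\varepsilon\to 0$; you would need the unassumed quantitative decay $\bigl(\eta_{p,\Psi}f(\varepsilon)\bigr)^{p}=o(\varepsilon^{n-1})$. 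The bound you obtain is indeed uniform in $x$ and in $R$ close to $r$, but it is a fixed positive number, not one that can be pushed below an arbitrary $\delta>0$.

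The paper sidesteps all of this with a direct dominated-convergence argument. Given $r_k\to r$, fix $r_*\ge r,r_k$ and, for each fixed $x$, set
\[
g_k(y)=\frac{|f(y)|^{p}\Psi(|x-y|)}{|x-y|^{n}}\chi_{B(x,r_k)}(y),
\qquad
g(y)=\frac{|f(y)|^{p}\Psi(|x-y|)}{|x-y|^{n}}\chi_{B(x,r)}(y).
\]
Then $g_k\to g$ a.e.\ on $B(x,r_*)$ and the common dominating function $\dfrac{|f|^{p}\Psi(|x-\cdot|)}{|x-\cdot|^{n}}\chi_{B(x,r_*)}$ is integrable precisely because $f\in S_{p,\Psi}$. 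Dominated convergence gives $\eta_{p,\Psi}^{(x)}f(r_k)\to\eta_{p,\Psi}^{(x)}f(r)$ for every $x$, and the paper then passes to the supremum over $x$. No condition on $\Psi$ is invoked at any point.
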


\begin{proof}
Let $\{ r_{k} \}$ be a sequence in $(0,\infty)$ with $r_{k} \to r \in (0,\infty)$
and $x \in \R^{n}$. Choose $ r_{*} > 0 $ such that $r, r_{k} \leq r_{*}$
for every $ k \in \N$. Next, for every $ k \in \N $, define
\begin{equation*}
g_{k}(y) := \frac{|f(y)|^{p} \Psi(|y-x|)}{|y-x|^{n}} \chi_{B(x,r_{k})}(y) \quad{\rm and}\quad
g(y) := \frac{|f(y)|^{p} \Psi(|y-x|)}{|y-x|^{n}} \chi_{B(x,r)}(y),
\end{equation*}
for $ y \in B(x,r_{*}) $. We see that $ \{ g_{k} \} $ is a sequence of
nonnegative measurable functions on $ B(x,r_{*}) $, and $ g_{k} \to g $
almost everywhere on $B(x,r_{*})$. By the Dominated Convergence Theorem we obtain
\begin{equation*}
\int_{|y-x|<r_{*}} g_{k}(y) dy \rightarrow \int_{|y-x|<r_{*}} g(y) dy.
\end{equation*}
Therefore
\begin{equation}\label{eq 2.1}
\left(\int_{|y-x|<r_{k}} \frac{|f(y)|^{p} \Psi(|y-x|)}{|y-x|^{n}} dy
\right)^{\frac{1}{p}}  \rightarrow \left( \int_{|y-x|<r} \frac{|f(y)|^{p}
\Psi(|y-x|)}{|y-x|^{n}} dy \right)^{\frac{1}{p}}.
\end{equation}
Let $\epsilon$ be any positive real number. By (\ref{eq 2.1}), there exists $k_{0}
\in \N$ such that for all $ k \in \N $ with $k \geq k_{0}$ we have
\begin{align*}
\left(\int_{|y-x|<r} \frac{|f(y)|^{p} \Psi(|y-x|)}{|y-x|^{n}} dy\right)^{\frac{1}{p}} - \epsilon
& < \left(\int_{|y-x|<r_{k}} \frac{|f(y)|^{p} \Psi(|y-x|)}{|y-x|^{n}} dy\right)^{\frac{1}{p}} \\
& < \left(\int_{|y-x|<r} \frac{|f(y)|^{p} \Psi(|y-x|)}{|y-x|^{n}} dy\right)^{\frac{1}{p}} + \epsilon.
\end{align*}
Since $x \in \R^{n}$ is arbitrary, we conclude that
\begin{equation*}
\eta_{p,\Psi}f(r) - \epsilon \leq \eta_{p,\Psi}f(r_{k}) \leq \eta_{p,\Psi}f(r) + \epsilon.
\end{equation*}
Thus, we have proved that $\eta_{p,\Psi}f(r_{k}) \rightarrow \eta_{p,\Psi}f(r)$ for any
sequence $\{ r_{k} \}$ in $(0,\infty)$ with $ r_{k} \rightarrow r \in (0,\infty) $.
This means that $\eta_{p,\Psi}f$ is continuous on $(0,\infty)$.
\end{proof}

\begin{prop}\label{pr 2.2}
Let $\Psi$ satisfy the condition (\ref{eq 1.3}). If $f \in S_{p,\Psi}$, then $\eta_{p,\Psi}f$
satisfies the doubling condition.
\end{prop}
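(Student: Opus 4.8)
The goal is to show that if $f \in S_{p,\Psi}$ and $\Psi$ satisfies \eqref{eq 1.3}, then $\eta_{p,\Psi}f$ satisfies the doubling condition \eqref{eq 1.2}, i.e.\ there is a constant $A>0$, independent of $r>0$, with $\frac{1}{A} \leq \frac{\eta_{p,\Psi}f(2r)}{\eta_{p,\Psi}f(r)} \leq A$ (more precisely the inequalities for $1 \le s/r \le 2$). The plan is to establish the nontrivial bound $\eta_{p,\Psi}f(2r) \leq c\, \eta_{p,\Psi}f(r)$; the lower bound (equivalently, $\eta_{p,\Psi}f(r) \leq \eta_{p,\Psi}f(2r)$) is immediate because the Stummel $p$-modulus is nondecreasing, as already observed after Definition~\ref{D1}, and the doubling constant is symmetric once we have one direction for the ratio $2$ (a general ratio $s/r \in [1,2]$ then follows from monotonicity: $\eta_{p,\Psi}f(r) \le \eta_{p,\Psi}f(s) \le \eta_{p,\Psi}f(2r) \le c\,\eta_{p,\Psi}f(r)$).

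The main step is to bound, for each fixed $x \in \R^n$ and each $r>0$, the integral over the annulus $r \le |x-y| < 2r$ by a constant times $\eta_{p,\Psi}f(r)^p$. First I would split the ball $B(x,2r)$ into $B(x,r)$ and the annulus $B(x,2r)\setminus B(x,r)$; the contribution of $B(x,r)$ is at most $\eta_{p,\Psi}f(r)^p$ by definition. For the annulus, I would cover it by finitely many balls of radius $r$: there is a dimensional constant $N=N(n)$ and points $z_1,\dots,z_N$ (which may be taken on the sphere $|x-z_j| = r$, or with $|x - z_j|$ comparable to $r$) such that the annulus is contained in $\bigcup_{j=1}^N B(z_j, r)$. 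On each such ball, for $y \in B(z_j,r)$ lying in the annulus we have $|x-y| \ge r \ge \tfrac12 |z_j - y|$ roughly, and more importantly $|z_j - y| < r \le |x-y|$, so by the monotonicity hypothesis \eqref{eq 1.3} applied with $s = |z_j - y| \le r \le |x-y| = \text{(larger radius)}$ — wait, \eqref{eq 1.3} says $\Psi(r)/r^n \le A_2 \Psi(s)/s^n$ for $s \le r$, i.e.\ the quantity $\Psi(t)/t^n$ is almost decreasing — so with $s = |z_j-y|$ and the larger value $|x-y|$ we get $\dfrac{\Psi(|x-y|)}{|x-y|^n} \le A_2 \dfrac{\Psi(|z_j-y|)}{|z_j-y|^n}$. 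Hence
\begin{equation*}
\int_{\substack{y \in B(z_j,r)\\ r \le |x-y| < 2r}} \frac{|f(y)|^p \Psi(|x-y|)}{|x-y|^n}\,dy
\le A_2 \int_{|z_j - y| < r} \frac{|f(y)|^p \Psi(|z_j-y|)}{|z_j-y|^n}\,dy
\le A_2\, \eta_{p,\Psi}f(r)^p.
\end{equation*}
Summing over $j = 1,\dots,N$ and adding the $B(x,r)$ term gives $\int_{|x-y|<2r} \frac{|f(y)|^p \Psi(|x-y|)}{|x-y|^n}\,dy \le (1 + N A_2)\,\eta_{p,\Psi}f(r)^p$. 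Taking the supremum over $x \in \R^n$ and then the $p$-th root yields $\eta_{p,\Psi}f(2r) \le (1 + N A_2)^{1/p}\,\eta_{p,\Psi}f(r)$.

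The point requiring the most care is the covering argument and the bookkeeping of which radius plays the role of $s$ and which the role of $r$ in \eqref{eq 1.3}: one must make sure that on each covering ball $B(z_j,r)$ the point $y$ genuinely satisfies $|z_j - y| < r$ (so that the integral is dominated by $\eta_{p,\Psi}f$ evaluated at $r$, not at something larger), and simultaneously $|x - y| \ge r \ge |z_j - y|$ so that \eqref{eq 1.3} can be invoked in the correct direction. A clean way is to take the $z_j$ so that $\{B(z_j, r/2)\}$ covers the annulus and $|x - z_j| \in [r, 3r/2]$; then for $y \in B(z_j, r/2)$ one has $|z_j - y| < r/2 \le |x - y|$ automatically. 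With $N$ depending only on $n$, the resulting doubling constant $A = A(n, A_2, p)$ is independent of $r$, which completes the proof. Finally one notes that $f \in S_{p,\Psi}$ guarantees $\eta_{p,\Psi}f(r) < \infty$ for all $r$, so the ratios are well defined.
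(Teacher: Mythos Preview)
Your proof is correct and follows essentially the same approach as the paper: cover by a dimensional number of balls of half the radius and use condition~\eqref{eq 1.3} to replace the kernel centered at $x$ by the kernel centered at the covering centers, then take the supremum. The only cosmetic difference is that the paper covers all of $B(x,r)$ by balls $B(x_i,r/2)$ and then splits each integral into the regions $|y-x|>|y-x_i|$ and $|y-x|\le |y-x_i|$, whereas you first peel off the inner ball $B(x,r)$ and cover only the annulus, which lets you arrange $|z_j-y|<|x-y|$ throughout and avoid that case split; the underlying mechanism is identical.
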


\begin{proof}
Let $x \in \R^{n}$ and $r > 0$. Choose $m = m(n) \in \N$ and $x_{1}, \dots, x_{m}\in B(x,r)$
such that
\begin{equation*}
B(x,r) \subseteq \bigcup_{i = 1}^{m} B\left( x_{i}, \frac{r}{2} \right).
\end{equation*}
Note that
\begin{align}\label{eq 2.2}
\left( \int_{|y-x|<r} \frac{|f(y)|^{p} \Psi(|y-x|)}{|y-x|^{n}} dy \right)^{\frac{1}{p}}
& \leq \sum_{i=1}^{m} \left( \int_{|y-x_{i}|<\frac{r}{2}}  \frac{|f(y)|^{p}
\Psi(|y-x|)}{|y-x|^{n}} dy\right)^{\frac{1}{p}} \\
& = \sum_{i=1}^{m} I_{i}. \nonumber
\end{align}
For $ i = 1, \dots,m $, we have
\begin{align}\label{eq 2.3}
I_{i}
& = \left( \int_{|y-x_{i}|<\frac{r}{2}} \frac{|f(y)|^{p} \Psi(|y-x|)}{|y-x|^{n}} dy \right)^{\frac{1}{p}} \\
& \leq \left( \int_{|y-x| > |y-x_{i}|, |y-x_{i}|<\frac{r}{2}} \frac{|f(y)|^{p} \Psi(|y-x|)}{|y-x|^{n}} dy
\right)^{\frac{1}{p}} \nonumber \\
& \qquad +  \left( \int_{|y-x| \leq |y-x_{i}|<\frac{r}{2}} \frac{|f(y)|^{p} \Psi(|y-x|)}{|y-x|^{n}} dy
\right)^{\frac{1}{p}} \nonumber \\
& = A_{i} + B_{i}. \nonumber
\end{align}
By the condition (\ref{eq 1.3}) on $ \Psi $, we obtain
\begin{align*}
A_{i} & = \left( \int_{|y-x| > |y-x_{i}|, |y-x_{i}|<\frac{r}{2}} \frac{|f(y)|^{p}
\Psi(|y-x|)}{|y-x|^{n}} dy\right)^{\frac{1}{p}} \\
& \leq c(p) \left( \int_{|y-x| > |y-x_{i}|, |y-x_{i}|<\frac{r}{2}} \frac{|f(y)|^{p}
\Psi(|y-x_{i}|)}{|y-x_{i}|^{n}} dy\right)^{\frac{1}{p}} \\
& \leq c(p) \left( \int_{|y-x_{i}|<\frac{r}{2}} \frac{|f(y)|^{p}
\Psi(|y-x_{i}|)}{|y-x_{i}|^{n}} dy\right)^{\frac{1}{p}}
\leq c(p) \, \eta_{p,\Psi}f\left( \frac{r}{2} \right).
\end{align*}
It is clear that
\begin{align*}
B_{i} \leq \left( \int_{|y-x|<\frac{r}{2}} \frac{|f(y)|^{p}
\Psi(|y-x|)}{|y-x|^{n}} dy\right)^{\frac{1}{p}}
\leq \eta_{p,\Psi}f\left( \frac{r}{2} \right).
\end{align*}
From (\ref{eq 2.2}) and (\ref{eq 2.3}), we get
\begin{align}\label{eq 2.4}
\left( \int_{|y-x|<r} \frac{|f(y)|^{p} \Psi(|y-x|)}{|y-x|^{n}} dy \right)^{\frac{1}{p}}
& \leq m(n)(c(p)+1) \, \eta_{p,\Psi}f\left( \frac{r}{2} \right) \\
& = c(n,p) \, \eta_{p,\Psi}f\left( \frac{r}{2} \right). \nonumber
\end{align}
Since the inequality (\ref{eq 2.4}) holds for all $ x \in \R^{n} $, we obtain
\begin{equation*}
\eta_{p,\Psi}f(r) \leq c(n,p) \, \eta_{p,\Psi}f\left( \frac{r}{2} \right).
\end{equation*}
According to the fact that $ \eta_{p,\Psi}f $ is nondecreasing, we conclude that
$ \eta_{p,\Psi}f $ satisfies the doubling condition.
\end{proof}

\section{Inclusion Between Generalized Stummel Classes}

In this section, we are going to investigate the inclusion between two Stummel classes.
The first proposition discusses the relationship between Stummel classes with different
parameters $ \Psi $. (Unless otherwise stated, we always assume that $1\le p<\infty$.)

\begin{prop}\label{pr 3.1}
Suppose that $ \Psi_{2} $ satisfies the condition (\ref{eq 1.3}) and that there exist $c>0$
and $ \delta > 0 $ such that $ \Psi_{2}(t) \leq c\,\Psi_{1}(t) $ for every $t \in (0,\delta)$.
Then $ S_{p,\Psi_{1}} \subseteq S_{p,\Psi_{2}}$.
\end{prop}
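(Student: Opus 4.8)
The plan is to show that any $f \in S_{p,\Psi_1}$ lies in $S_{p,\Psi_2}$ by comparing the two Stummel $p$-moduli on small radii. The natural split is dictated by the hypothesis: the bound $\Psi_2(t) \le c\,\Psi_1(t)$ is only available for $t \in (0,\delta)$, so I would handle the inner ball $B(x,r)$ with $r < \delta$ directly, where $|x-y| < r < \delta$ forces $\Psi_2(|x-y|) \le c\,\Psi_1(|x-y|)$ pointwise. This gives immediately
\begin{align*}
\left(\int_{|x-y|<r} \frac{|f(y)|^p \Psi_2(|x-y|)}{|x-y|^n}\,dy\right)^{\frac{1}{p}}
\le c^{\frac{1}{p}} \left(\int_{|x-y|<r} \frac{|f(y)|^p \Psi_1(|x-y|)}{|x-y|^n}\,dy\right)^{\frac{1}{p}},
\end{align*}
and taking the supremum over $x \in \R^n$ yields $\eta_{p,\Psi_2}f(r) \le c^{\frac{1}{p}}\,\eta_{p,\Psi_1}f(r)$ for all $r \in (0,\delta)$.

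With that pointwise comparison in hand, the conclusion follows quickly. First, $f \in S_{p,\Psi_1} \subseteq L^p_{\rm loc}$, so the membership condition on $f$ is satisfied. Second, since $f \in S_{p,\Psi_1}$ we have $\eta_{p,\Psi_1}f(r) \searrow 0$ as $r \searrow 0$; combined with the inequality $0 \le \eta_{p,\Psi_2}f(r) \le c^{\frac{1}{p}}\,\eta_{p,\Psi_1}f(r)$ valid for small $r$, a squeeze argument gives $\eta_{p,\Psi_2}f(r) \to 0$ as $r \searrow 0$. Finally, $\eta_{p,\Psi_2}f$ is nondecreasing on $(0,\infty)$ (as noted after Definition~\ref{D1}), so $\eta_{p,\Psi_2}f(r) \searrow 0$ as $r \searrow 0$. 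Hence $f \in S_{p,\Psi_2}$.

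The role of condition~(\ref{eq 1.3}) on $\Psi_2$ deserves a comment, since the argument above did not seem to use it. In fact the monotonicity of $\eta_{p,\Psi_2}f$ and the fact that $\eta_{p,\Psi_2}f(r)$ is finite for small $r$ (hence, by monotonicity, could a priori still blow up for large $r$) are all that is strictly needed for the definition of $S_{p,\Psi_2}$ to make sense; condition~(\ref{eq 1.3}) guarantees, via Proposition~\ref{pr 2.2}, that the resulting modulus is genuinely a well-behaved (doubling) function, which is the reason it is bundled into the hypothesis. I do not anticipate a real obstacle here: the only point requiring a shade of care is making sure the pointwise inequality $\Psi_2(|x-y|) \le c\,\Psi_1(|x-y|)$ is applied only on the region $|x-y| < \delta$, which is exactly why the argument is phrased for radii $r < \delta$ and then extended to all $r$ by monotonicity.
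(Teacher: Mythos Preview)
Your proof is correct and follows essentially the same route as the paper: restrict to radii $r$ below $\delta$, use the pointwise bound $\Psi_2(|x-y|)\le c\,\Psi_1(|x-y|)$ inside the integral, take the supremum over $x$, and conclude $\eta_{p,\Psi_2}f(r)\le c^{1/p}\eta_{p,\Psi_1}f(r)\searrow 0$. Your observation that condition~(\ref{eq 1.3}) on $\Psi_2$ is never actually invoked in the argument is accurate---the paper's proof does not use it either.
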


\begin{proof}
Let $ f \in S_{p,\Psi_{1}} $, $ x \in \R^{n} $, and $ r > 0 $. For $ r \le \delta $, we have
\[
\left(  \int_{|y-x|<r} \frac{|f(y)|^{p} \Psi_{2}(|y-x|)}{|y-x|^{n}} dy \right)^{\frac{1}{p}}
\leq c^{\frac{1}{p}} \left(  \int_{|y-x|<r} \frac{|f(y)|^{p} \Psi_{1}(|y-x|)}{|y-x|^{n}} dy
\right)^{\frac{1}{p}},
\]
whence $ \eta_{p,\Psi_{2}}f(r) \leq c^{\frac{1}{p}} \, \eta_{p,\Psi_{1}}f(r) \searrow 0 $ for
$ r \searrow 0 $. Hence $ f \in S_{p,\Psi_{2}} $.
\end{proof}

As an immediate consequence of Proposition \ref{pr 3.1}, we have the following corollary.

\begin{cor}\label{cor 3.2}
If $0<\alpha \le \beta<n$, then $S_{p,\alpha} \subseteq S_{p,\beta}$.
\end{cor}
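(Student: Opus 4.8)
The plan is to deduce Corollary \ref{cor 3.2} directly from Proposition \ref{pr 3.1} by verifying that the pair $\Psi_1(t):=t^\alpha$ and $\Psi_2(t):=t^\beta$ satisfies the two hypotheses of that proposition. First I would check the condition (\ref{eq 1.3}) for $\Psi_2(t)=t^\beta$: for $s\le r$ we have $\Psi_2(r)/r^n = r^{\beta-n}$ and $\Psi_2(s)/s^n = s^{\beta-n}$, and since $\beta-n<0$ the function $t\mapsto t^{\beta-n}$ is decreasing, so $r^{\beta-n}\le s^{\beta-n}$, i.e. (\ref{eq 1.3}) holds with $A_2=1$. This is the only place where the upper bound $\beta<n$ is used.

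Next I would verify the comparison hypothesis $\Psi_2(t)\le c\,\Psi_1(t)$ on some interval $(0,\delta)$. Since $\alpha\le\beta$, for $t\in(0,1)$ we have $t^\beta\le t^\alpha$, so the inequality holds with $c=1$ and $\delta=1$. With both hypotheses of Proposition \ref{pr 3.1} in hand, that proposition yields $S_{p,\Psi_1}\subseteq S_{p,\Psi_2}$, which is exactly $S_{p,\alpha}\subseteq S_{p,\beta}$.

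I do not anticipate any real obstacle here; the statement is a straightforward specialization. The only point requiring a moment's care is matching the direction of the monotonicity in (\ref{eq 1.3}) with the sign of $\beta-n$, and making sure the comparison $t^\beta\le t^\alpha$ is taken on $(0,1)$ rather than on $(1,\infty)$, where the inequality would reverse.
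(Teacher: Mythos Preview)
Your proposal is correct and follows exactly the route the paper intends: the corollary is stated as an immediate consequence of Proposition~\ref{pr 3.1}, and you have verified precisely the two hypotheses needed, namely condition~(\ref{eq 1.3}) for $\Psi_2(t)=t^\beta$ (using $\beta<n$) and the comparison $t^\beta\le t^\alpha$ on $(0,1)$ (using $\alpha\le\beta$). There is nothing to add.
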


\begin{remark}
For $0<\alpha<\beta<n$, the above inclusion is proper. Indeed, for $0<\beta<n$, define
$ f : \R^{n} \longrightarrow \R $ by the formula
\begin{equation*}
f(y) := \left( \frac{\chi_{B}(y)}{ |y|^{\beta} \, |\ln|y||^{2} } \right)^{\frac{1}{p}},
\quad y \in \R^{n},
\end{equation*}
where $B := B(0,e^{-\frac{2}{\beta}})$. Then $ f \in S_{p,\beta} \backslash S_{p,\alpha}$
whenever $0<\alpha<\beta$.
\end{remark}

The next proposition shows the relationship between two Stummel classes with different
parameters $p$.

\begin{prop}\label{pr 3.2}
If $ 1 \leq p_2 \le p_1 <\infty$ and $ \Psi $ satisfies (\ref{eq 1.1}),
then $ S_{p_1,\Psi} \subseteq S_{p_2,\Psi}$.
\end{prop}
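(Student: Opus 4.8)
The plan is to control the $L^{p_2}$-type Stummel modulus by the $L^{p_1}$-type one via Hölder's inequality, paying the price of an extra integral factor that turns out to be finite precisely because of condition (\ref{eq 1.1}). Fix $f \in S_{p_1,\Psi}$, a point $x \in \R^n$, and a radius $r>0$. On the ball $B(x,r)$ write the integrand for $\eta_{p_2,\Psi}f(r)$ as
\[
\frac{|f(y)|^{p_2}\,\Psi(|x-y|)}{|x-y|^n}
= \left(\frac{|f(y)|^{p_1}\,\Psi(|x-y|)}{|x-y|^n}\right)^{\!p_2/p_1}
\cdot\left(\frac{\Psi(|x-y|)}{|x-y|^n}\right)^{1-p_2/p_1},
\]
which is an honest factorization since the two exponents $p_2/p_1$ and $1-p_2/p_1$ are nonnegative and sum to $1$. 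Applying Hölder's inequality with exponents $p_1/p_2$ and its conjugate $p_1/(p_1-p_2)$ (the case $p_1=p_2$ being trivial), I would obtain
\[
\int_{|x-y|<r}\frac{|f(y)|^{p_2}\,\Psi(|x-y|)}{|x-y|^n}\,dy
\le \left(\int_{|x-y|<r}\frac{|f(y)|^{p_1}\,\Psi(|x-y|)}{|x-y|^n}\,dy\right)^{\!p_2/p_1}
\left(\int_{|x-y|<r}\frac{\Psi(|x-y|)}{|x-y|^n}\,dy\right)^{\!1-p_2/p_1}.
\]

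Next I would evaluate the second factor by switching to polar coordinates: $\int_{|x-y|<r}\Psi(|x-y|)|x-y|^{-n}\,dy = c(n)\int_0^r \Psi(t)\,t^{-1}\,dt$, which is independent of $x$. Denote this quantity by $\Phi(r)$; it is finite for all $r$ small by (\ref{eq 1.1}), it is nondecreasing in $r$, and crucially $\Phi(r)\to 0$ as $r\searrow 0$ by absolute continuity of the Lebesgue integral. Taking the supremum over $x \in \R^n$ and then a $p_2$-th root gives
\[
\eta_{p_2,\Psi}f(r) \le \big(\eta_{p_1,\Psi}f(r)\big)^{p_2/p_1}\,\Phi(r)^{(1-p_2/p_1)/p_2},
\]
valid at least for $r$ below some fixed threshold where (\ref{eq 1.1}) guarantees $\Phi(r)<\infty$. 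Since $f\in S_{p_1,\Psi}$ the first factor on the right tends to $0$ as $r\searrow 0$ and the second is bounded (indeed also tends to $0$), so $\eta_{p_2,\Psi}f(r)\searrow 0$; together with the fact that $\eta_{p_2,\Psi}f$ is nondecreasing (noted after Definition \ref{D1}) this shows $f\in S_{p_2,\Psi}$.

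The only real subtlety is making sure the right-hand factors are finite and well-behaved: condition (\ref{eq 1.1}) is exactly what is needed to guarantee $\Phi(r)<\infty$ for small $r$, and one should restrict attention to such $r$ from the outset (the behavior of $\eta_{p,\Psi}f$ near $0$ is all that enters the definition of the Stummel class, so this restriction is harmless). A minor bookkeeping point is the degenerate case $p_1=p_2$, where the Hölder step is vacuous and the inclusion is an equality; I would dispatch that at the start.
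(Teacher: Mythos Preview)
Your proof is correct and follows essentially the same route as the paper: factor the integrand, apply H\"older with exponents $p_1/p_2$ and its conjugate, convert the second factor to $c(n)\int_0^r \Psi(t)t^{-1}\,dt$ via polar coordinates, and let $r\searrow 0$. One small slip: after taking the $p_2$-th root the first factor should be $\eta_{p_1,\Psi}f(r)$ with exponent $1$, not $p_2/p_1$ (since $\bigl[(\int|f|^{p_1}\Psi|x-y|^{-n}\,dy)^{p_2/p_1}\bigr]^{1/p_2}=(\int|f|^{p_1}\Psi|x-y|^{-n}\,dy)^{1/p_1}\le \eta_{p_1,\Psi}f(r)$), but this does not affect the conclusion.
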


\begin{proof}
Let $ f \in S_{p_1,\Psi}$, $ x \in \R^{n} $, and $ 0 < r \leq 1 $. Then by H\"{o}lder's inequality
we have
\begin{align*}
\int_{|y-x|<r} \frac{|f(y)|^{p_2} \Psi(|y-x|)}{|y-x|^{n}} \, dy
& \leq \left(\int_{|y-x|<r} \frac{|f(y)|^{p_1} \Psi(|y-x|)}{|y-x|^{n}}\, dy\right)^{\frac{p_2}{p_1}} \\
& \qquad \times \left(\int_{|y-x|<r} \frac{\Psi(|y-x|)}{|y-x|^{n}}\, dy\right)^{1-\frac{p_2}{p_1}} \\
& = \left(\int_{|y-x|<r} \frac{|f(y)|^{p_1} \Psi(|y-x|)}{|y-x|^{n}}\, dy\right)^{\frac{p_2}{p_1}} \\
& \qquad \times \left(c(n) \int_{0}^{r} \frac{\Psi(t)}{t}\, dt \right)^{1-\frac{p_2}{p_1}}.
\end{align*}
Therefore
\begin{align*}
\eta_{p_2,\Psi}f(r) \leq c(n,p_1,p_2) \, \eta_{p_1,\Psi}f(r) \, \left(\int_{0}^{r} \frac{\Psi(t)}{t}\, dt
\right)^{\frac{1}{p_2} - \frac{1}{p_1}} \searrow 0 \quad \text{for} \quad r \searrow 0,
\end{align*}
which tells us that $ f \in S_{p_2,\Psi}$. We conclude that $ S_{p_1,\Psi} \subseteq S_{p_2,\Psi}$.
\end{proof}

As a consequence of Proposition \ref{pr 3.2}, we have the following corollary.

\begin{cor}\label{cor 3.3}
If $1\leq p_2 \le p_1< \infty $, then $S_{p_1,\alpha}\subseteq S_{p_2,\alpha}$.
\end{cor}

\begin{remark}
For $1\le p_2<p_1<\infty$, the above inclusion is proper. Indeed, for
$\frac{\alpha}{p_1}< \gamma <\min \{ \frac{\alpha}{p_2}, \frac{n}{p_1} \}$, we have
$f(y):=|y|^{-\gamma}\in S_{p_2,\alpha} \backslash S_{p_1,\alpha}$.
\end{remark}

From Proposition \ref{pr 3.1} and Proposition \ref{pr 3.2}, we get the following corollary.

\begin{cor}
Suppose that $ 1 \leq p_2 \le p_1<\infty $, $ \Psi_2$ satisfies the conditions
(\ref{eq 1.1}) and (\ref{eq 1.3}),
and there exist $c>0$ and $\delta > 0$ such that $ \Psi_2(t) \leq c \, \Psi_1(t) $ for every
$t \in (0,\delta)$. Then $ S_{p_1,\Psi_1} \subseteq S_{p_2,\Psi_2}$.
\end{cor}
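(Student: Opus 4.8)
The plan is to obtain this corollary simply by composing the two inclusions already established, with Proposition \ref{pr 3.2} applied first and Proposition \ref{pr 3.1} applied second. Since $\Psi_2$ satisfies (\ref{eq 1.1}) and $1\le p_2\le p_1<\infty$, Proposition \ref{pr 3.2} gives $S_{p_1,\Psi_2}\subseteq S_{p_2,\Psi_2}$. Separately, since $\Psi_2$ satisfies (\ref{eq 1.3}) and $\Psi_2(t)\le c\,\Psi_1(t)$ on $(0,\delta)$, Proposition \ref{pr 3.1} gives $S_{p_1,\Psi_1}\subseteq S_{p_1,\Psi_2}$. Chaining these yields $S_{p_1,\Psi_1}\subseteq S_{p_1,\Psi_2}\subseteq S_{p_2,\Psi_2}$, which is exactly the claim.

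The only point requiring a moment's care is to check that the hypotheses of each proposition are genuinely met: Proposition \ref{pr 3.1} needs the doubling-type condition (\ref{eq 1.3}) on the \emph{target} weight (here $\Psi_2$) and the pointwise domination near $0$, both of which are assumed; Proposition \ref{pr 3.2} needs (\ref{eq 1.1}) on the common weight (here $\Psi_2$, after the first inclusion has already replaced $\Psi_1$ by $\Psi_2$) and the ordering $p_2\le p_1$, both of which are also assumed. There is no circularity because the intermediate space $S_{p_1,\Psi_2}$ uses the larger exponent $p_1$ with the weight $\Psi_2$. Since both propositions are already proved in the excerpt, no new estimates are needed.

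Concretely, I would write: Let $f\in S_{p_1,\Psi_1}$. By Proposition \ref{pr 3.1} (applicable because $\Psi_2$ satisfies (\ref{eq 1.3}) and $\Psi_2\le c\,\Psi_1$ on $(0,\delta)$), we have $f\in S_{p_1,\Psi_2}$. By Proposition \ref{pr 3.2} (applicable because $p_2\le p_1$ and $\Psi_2$ satisfies (\ref{eq 1.1})), we then have $f\in S_{p_2,\Psi_2}$. Hence $S_{p_1,\Psi_1}\subseteq S_{p_2,\Psi_2}$.

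I do not anticipate any real obstacle; the statement is a formal consequence of the two preceding propositions, and the only thing to get right is the order of composition and the bookkeeping of which weight each hypothesis is imposed on. The main (very minor) subtlety is making sure the reader sees that (\ref{eq 1.1}) is needed for $\Psi_2$, not $\Psi_1$, which is automatic once we pass through $S_{p_1,\Psi_2}$.
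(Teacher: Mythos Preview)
Your proposal is correct and matches the paper's own approach: the corollary is stated without a detailed proof, merely as a consequence of Propositions \ref{pr 3.1} and \ref{pr 3.2}, which is exactly the composition $S_{p_1,\Psi_1}\subseteq S_{p_1,\Psi_2}\subseteq S_{p_2,\Psi_2}$ you carry out. Your verification of which hypotheses go with which proposition is accurate.
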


\section{Inclusion Between Stummel Classes and Morrey Spaces}

Our next theorem gives an inclusion relation between generalized Morrey spaces and
generalized Stummel classes. We also give an example of a function that belongs to
the generalized Stummel class but not to the generalized Morrey space.

\begin{theorem}\label{th:23518-1}
Let $1\le p_2\le p_1<\infty$. Assume that $\Psi_1$ satisfies \eqref{eq 1.2} and that
$\Psi_2$ satisfies the right-doubling condition \eqref{eq 1.4}. If
\begin{align}\label{eq:23518-0}
\int_0^1 \frac{\Psi_1(t)^{p_2}\Psi_2(t)}{t} \ dt<\infty,
\end{align}
then  $ L^{p_1,\Psi_1} \subseteq S_{p_2,\Psi_2}$.
\end{theorem}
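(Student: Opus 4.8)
The plan is to fix $f \in L^{p_1,\Psi_1}$, an arbitrary $x\in\R^n$, and $0<r\le 1$, and to estimate the Stummel $p_2$-modulus by breaking the ball $B(x,r)$ into dyadic annuli $B(x,2^{-k}r)\setminus B(x,2^{-k-1}r)$ for $k=0,1,2,\dots$. On the $k$th annulus we have $|y-x|\sim 2^{-k}r$, so that
\begin{align*}
\int_{B(x,r)} \frac{|f(y)|^{p_2}\Psi_2(|y-x|)}{|y-x|^n}\,dy
\le c\sum_{k=0}^\infty \frac{\Psi_2(2^{-k}r)}{(2^{-k}r)^n}\int_{B(x,2^{-k}r)} |f(y)|^{p_2}\,dy,
\end{align*}
where I use the right-doubling condition \eqref{eq 1.4} on $\Psi_2$ to replace $\Psi_2(|y-x|)$ by $\Psi_2(2^{-k}r)$ up to a constant. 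Then I would apply H\"older's inequality (exponents $p_1/p_2$ and its conjugate, exactly as in the proof of Proposition \ref{pr 3.2}) to each inner integral, getting
\begin{align*}
\int_{B(x,2^{-k}r)} |f(y)|^{p_2}\,dy \le |B(x,2^{-k}r)|^{1-\frac{p_2}{p_1}}\,\|f\|_{L^{p_1}(B(x,2^{-k}r))}^{p_2}
\le c\,(2^{-k}r)^{n}\,\Psi_1(2^{-k}r)^{p_2}\,\|f\|_{L^{p_1,\Psi_1}}^{p_2},
\end{align*}
the last inequality being the defining bound for the generalized Morrey space $L^{p_1,\Psi_1}$ applied to the ball $B(x,2^{-k}r)$.

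Substituting this back, the geometric factors $(2^{-k}r)^n$ cancel, and I obtain
\begin{align*}
\eta_{p_2,\Psi_2}f(r)^{p_2} \le c\,\|f\|_{L^{p_1,\Psi_1}}^{p_2}\sum_{k=0}^\infty \Psi_1(2^{-k}r)^{p_2}\,\Psi_2(2^{-k}r).
\end{align*}
The remaining task is to show this dyadic sum tends to $0$ as $r\searrow 0$, and here the hypothesis \eqref{eq:23518-0} together with condition \eqref{eq 1.2} on $\Psi_1$ comes in: the doubling of $\Psi_1$ (hence of $\Psi_1^{p_2}$) and the right-doubling of $\Psi_2$ let me compare each term $\Psi_1(2^{-k}r)^{p_2}\Psi_2(2^{-k}r)$ with the integral $\int_{2^{-k-1}r}^{2^{-k}r}\frac{\Psi_1(t)^{p_2}\Psi_2(t)}{t}\,dt$ up to a uniform constant, so that the whole sum is dominated by $c\int_0^r \frac{\Psi_1(t)^{p_2}\Psi_2(t)}{t}\,dt$. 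By \eqref{eq:23518-0} this integral is finite and therefore, being the tail of a convergent integral, tends to $0$ as $r\searrow 0$. Since $x$ was arbitrary, this shows $\eta_{p_2,\Psi_2}f(r)\searrow 0$ as $r\searrow 0$; combined with the fact (noted after Definition \ref{D1}) that the Stummel modulus is nondecreasing and finite here, we conclude $f\in S_{p_2,\Psi_2}$.

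The main obstacle I anticipate is the bookkeeping in the comparison between the dyadic sum and the integral in \eqref{eq:23518-0}: one must invoke the doubling hypotheses in the correct direction (note that \eqref{eq 1.2} gives two-sided control on $\Psi_1$ but \eqref{eq 1.4} gives only an upper bound on ratios of $\Psi_2$, which is exactly why $\Psi_2$ appears to the first power and $\Psi_1$ to the $p_2$ power rather than symmetrically), and check that the constants stay independent of $x$ and $r$. A secondary technical point is justifying the H\"older step when $p_1=p_2$ (in which case it is trivial) versus $p_1>p_2$, but this is handled uniformly by writing the exponent $1-\frac{p_2}{p_1}\ge 0$. Everything else is a routine dyadic decomposition.
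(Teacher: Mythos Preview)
Your proposal is correct and follows the same dyadic-annulus decomposition plus H\"older-then-doubling strategy as the paper's own proof; the only difference is cosmetic reindexing ($k\ge 0$ with radii $2^{-k}r$ versus the paper's $k\le -1$ with radii $2^{k}r$). The bookkeeping you flag is indeed the only delicate spot: on the annulus $2^{-k-1}r\le |y-x|<2^{-k}r$, right-doubling \eqref{eq 1.4} yields $\Psi_2(|y-x|)\le A_3\,\Psi_2(2^{-k-1}r)$ (not $\Psi_2(2^{-k}r)$), so the resulting sum is $\sum_{k\ge 0}\Psi_1(2^{-k}r)^{p_2}\Psi_2(2^{-k-1}r)$, which after reindexing is exactly the paper's $\sum_{k\le -1}\Psi_1(2^{k+1}r)^{p_2}\Psi_2(2^{k}r)$.
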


\begin{remark}
Let $p_1=p_2=1$, $\Psi_1(t):=t^{\lambda-n}$ where $0\le\lambda\le n$, and $\Psi_2(t):=t^\alpha$
where $n-\lambda<\alpha<n$. Then the above theorem reduces to the result in \cite[p.~56]{RZ}.
\end{remark}

\begin{proof}[Proof of Theorem \ref{th:23518-1}]
Let $f\in L^{p_1,\Psi_1}$, $x \in \R^n$, and $r>0$. Since $\Psi_2$ satisfies \eqref{eq 1.4},
we have
\begin{align*}
\int_{|x-y|<r}\frac{|f(y)|^{p_2} \Psi_2(|x-y|)}{|x-y|^n}\ dy
&=
\sum_{k=-\infty}^{-1} \int_{2^kr\le |x-y|< 2^{k+1}r}
\frac{|f(y)|^{p_2} \Psi_2(|x-y|)}{|x-y|^n}\ dy
\\
&\le
c \sum_{k=-\infty}^{-1}\frac{ \Psi_2(2^kr)}{|B(x,2^{k+1}r)|}
\int_{B(x,2^{k+1}r)}|f(y)|^{p_2}\ dy.
\end{align*}
Combining the last inequality and H\"older's inequality, we get
\begin{align}\label{eq:23518-1}
\int_{|x-y|<r}\frac{|f(y)|^{p_2} \Psi_2(|x-y|)}{|x-y|^n}\ dy
&\le
c \sum_{k=-\infty}^{-1}\frac{\Psi_2(2^kr)}{|B(x,2^{k+1}r)|^{p_2/p_1}}
\|f\|_{L^{p_1}(B(x,2^{k+1}r))}^{p_2}
\nonumber
\\
&\le
c\,\|f\|_{L^{p_1,\Psi_1}}^{p_2}
\sum_{k=-\infty}^{-1} \Psi_1(2^{k+1}r)^{p_2}\Psi_2(2^kr).
\end{align}
Using \eqref{eq 1.4} and the monotonicity of $\Psi_1$, we get
\begin{align}\label{eq:23518-2}
\sum_{k=-\infty}^{-1} \Psi_1(2^{k+1}r)^{p_2}\Psi_2(2^kr)
\le
c \sum_{k=-\infty}^{-1} \int_{2^{k-1}r}^{2^kr} \frac{ \Psi_1(t)^{p_2}\Psi_2(t)}{t}\ dt
=
c \int_0^{r/2} \frac{ \Psi_1(t)^{p_2}\Psi_2(t)}{t}\ dt.
\end{align}
We combine \eqref{eq:23518-1} and \eqref{eq:23518-2} to obtain
\begin{align}\label{eq:23518-3}
\eta_{p_2,\Psi_2}f(r)
\le
c \left(\int_0^{r/2} \frac{ \Psi_1(t)^{p_2}\Psi_2(t)}{t}\ dt\right)^{\frac{1}{p_2}}
\|f\|_{L^{p_1, \Psi_1}}.
\end{align}
Since $\displaystyle \int_0^{1} \frac{ \Psi_1(t)^{p_2}\Psi_2(t)}{t}\ dt<\infty$, we see that
$\displaystyle \lim_{r\to 0^+}\int_0^{r/2} \frac{ \Psi_1(t)^{p_2}\Psi_2(t)}{t}\ dt=0$.
This fact and \eqref{eq:23518-3} imply $\lim\limits_{r\to 0^+} \eta_{p_2,\Psi_2}f(r) =0$.
Hence, $f\in S_{p_2,\Psi_2}$. This shows that $L^{p_1, \Psi_1}  \subseteq S_{p_2,\Psi_2}$.
\end{proof}

The following example shows that the inclusion in Theorem \ref{th:23518-1} is proper.

\begin{example}\label{ex: 1}
Let $ 1 \leq p_{2} \leq p_{1} < \infty $, $ \Psi_{2} $ satisfy the condition (\ref{eq 1.4}),
$ \Psi_{2}(t) \, |\ln(t)|^{2} $ be nondecreasing on $(0,\delta)$ for some $ \delta > 0 $,
and $ \Psi_{1}(r)^{p_2} \, \Psi_{2}(r) \, |\ln(r)|^{2} \searrow 0 $ as $ r \searrow 0 $.
Define $ f : \R^{n} \longrightarrow \R $ by the formula
\begin{equation*}
f(y) := \left( \frac{\chi_{B}(y)}{ \Psi_{2}(|y|) \, |\ln|y||^{2} } \right)^{\frac{1}{p_2}},
\quad y \in \R^{n},
\end{equation*}
where $B := B(0,\delta)$. Then $ f \in S_{p_2,\Psi_{2}} \backslash L^{p_1,\Psi_{1}} $.

First we show that $f\in S_{p_2,\Psi_{2}}$. Let $0<r<\min \{\delta, 1\}$.
Since the function $f$ is radial and nonincreasing, the supremum in the Stummel modulus
is attained at the origin, so that
\[
\eta_{p_2,\Psi_{2}}f(r)
=
\left(
\int_{|y|<r}
\frac{|f(y)|^{p_2} \Psi_2(|y|)}{|y|^n} \ dy
\right)^{\frac{1}{p_2}}
=
\left(
\int_{|y|<r}
\frac{1}{|\ln |y||^2|y|^n} \ dy
\right)^{\frac{1}{p_2}}.
\]
Converting to polar coordinates, we get
\[
\int_{|y|<r}
\frac{1}{|\ln |y||^2|y|^n} \ dy
=c\int_0^r \frac{1}{s (\ln s)^2} \ ds
=-\frac{c}{\ln r}.
\]
Therefore,
\[
\eta_{p_2,\Psi_{2}}f(r)
=
c \left(-\frac{1}{\ln r}\right)^{\frac{1}{p_2}}.
\]
Since $ \lim\limits_{r\to0^+} \frac{1}{\ln r} = 0 $, we conclude that
$ \eta_{p_2,\Psi_{2}}f(r) \searrow 0 $ for $ r \searrow 0 $. This proves
that $f \in S_{p_2,\Psi_{2}}$.

Now, we will show that $ f \notin L^{p_1,\Psi_{1}} $. Let $ 0 < r < \delta $.
Since $ \Psi_{2}(t) \, |\ln(t)|^{2} $ is nondecreasing on $ (0,r) \subseteq (0,\delta) $,
we have
\begin{align*}
\frac{1}{\Psi_{1}(r)^{p_1}} \frac{1}{ |B(0,r)| } \int_{B(0,r)} |f(y)|^{p_1} \, dy
& = \frac{1}{\Psi_{1}(r)^{p_1}} \frac{1}{ |B(0,r)| } \int_{B(0,r)} \left( \frac{1}
{ \Psi_{2}(|y|) \, |\ln|y||^{2} } \right)^{\frac{p_1}{p_2}}  \, dy \\
& \geq \frac{1}{\Psi_{1}(r)^{p_1} \, |B(0,r)|} \left( \frac{1}{\Psi_{2}(r) \,
|\ln r|^{2}}\right)^{\frac{p_1}{p_2}}  \int_{B(0,r)} \, dy \\
& = \left( \frac{1}{\Psi_{1}(r)^{p_2} \, \Psi_{2}(r) \, |\ln r|^{2} }
\right)^{\frac{p_1}{p_2}}.
\end{align*}
Note that $ \Psi_{1}(r)^{p_2} \, \Psi_{2}(r) \, |\ln r|^{2} \searrow 0 $  as
$ r \searrow 0 $. Then
\begin{equation*}
\left( \frac{1}{\Psi_{1}(r)^{p_2} \, \Psi_{2}(r) \, |\ln r|^{2} }
\right)^{\frac{p_1}{p_2}} \rightarrow \infty \quad \text{for} \quad r \searrow 0.
\end{equation*}
We conclude that $ f \notin L^{p_1,\Psi_{1}} $. \qed
\end{example}

\begin{remark}
Let $ 1 \leq p_{2} \leq p_{1} < \infty $, $ \Psi_{1}(t) := t^{\frac{\lambda-n}{p_{1}}} $ where
$0 \le \lambda \le n $, and $ \Psi_{2}(t) := t^\alpha $ where $\frac{(n-\lambda)p_2}{p_1}
< \alpha < n $. It can be shown that $ \Psi_{1} $ and $ \Psi_{2} $ satisfy all conditions
in Theorem \ref{th:23518-1} and Example \ref{ex: 1}.
\end{remark}

As a counterpart of Theorem \ref{th:23518-1}, we have the following result.

\begin{theorem}\label{th:28818-1}
Let $1\le p_2\le p_1<\infty$ and assume that $\Psi_1$ satisfies
\eqref{eq 1.3}. If $f\in S_{p_1,\Psi_1}$ and
\begin{align}\label{eq:28818-1}
\eta_{p_1,\Psi_{1}}f(r)\le c \Psi_{1}(r)^{\frac{1}{p_1}} \Psi_{2}(r)
\end{align}
for some $\Psi_{2}$ and for every $r>0$, then $f\in L^{p_2, \Psi_2}$.
\end{theorem}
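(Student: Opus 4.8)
The plan is to estimate, for an arbitrary $x\in\R^n$ and $r>0$, the local $L^{p_2}$-mean of $f$ over $B(x,r)$ directly in terms of the Stummel $p_1$-modulus $\eta_{p_1,\Psi_1}f(r)$, and then feed in the hypothesis \eqref{eq:28818-1} to cancel the factor $\Psi_1(r)$.

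First I would work with the exponent $p_1$. The crux is to read \eqref{eq 1.3} with $s=|x-y|\le r$ in the equivalent form $\dfrac{|x-y|^n}{\Psi_1(|x-y|)}\le A_2\,\dfrac{r^n}{\Psi_1(r)}$. Multiplying and dividing the integrand of $\int_{B(x,r)}|f(y)|^{p_1}\,dy$ by $\Psi_1(|x-y|)/|x-y|^n$ and applying this inequality yields
\[
\int_{B(x,r)}|f(y)|^{p_1}\,dy
\le A_2\,\frac{r^n}{\Psi_1(r)}\int_{|x-y|<r}\frac{|f(y)|^{p_1}\Psi_1(|x-y|)}{|x-y|^n}\,dy
\le A_2\,\frac{r^n}{\Psi_1(r)}\,\eta_{p_1,\Psi_1}f(r)^{p_1}.
\]
Here $f\in S_{p_1,\Psi_1}$ guarantees $f\in L^{p_1}_{\rm loc}$, and \eqref{eq:28818-1} guarantees $\eta_{p_1,\Psi_1}f(r)<\infty$ for every $r$, so this chain is legitimate. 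Inserting $\eta_{p_1,\Psi_1}f(r)^{p_1}\le c^{p_1}\Psi_1(r)\Psi_2(r)^{p_1}$ from \eqref{eq:28818-1}, the factor $\Psi_1(r)$ cancels and I obtain $\int_{B(x,r)}|f(y)|^{p_1}\,dy\le c\,r^n\,\Psi_2(r)^{p_1}$ with $c=c(n,p_1,p_2)$ (also depending on $A_2$ and the constant in \eqref{eq:28818-1}).

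Next I would pass from the exponent $p_1$ down to $p_2$. If $p_2=p_1$ there is nothing further to do. If $p_2<p_1$, H\"older's inequality with exponent $p_1/p_2$ gives
\[
\int_{B(x,r)}|f(y)|^{p_2}\,dy
\le\Big(\int_{B(x,r)}|f(y)|^{p_1}\,dy\Big)^{p_2/p_1}\,|B(x,r)|^{1-p_2/p_1}.
\]
Combining this with the previous bound and the identity $|B(x,r)|=c(n)r^n$, every power of $r$ and every power of $\Psi_2(r)$ matches up, so that
\[
\frac{|B(x,r)|^{-1/p_2}}{\Psi_2(r)}\,\|f\|_{L^{p_2}(B(x,r))}\le c(n,p_1,p_2),
\]
a constant independent of $x$ and $r$. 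Taking the supremum over $x\in\R^n$ and $r>0$ gives $\|f\|_{L^{p_2,\Psi_2}}<\infty$, i.e. $f\in L^{p_2,\Psi_2}$.

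The argument is elementary; the only genuine content is the rewriting of \eqref{eq 1.3} in the first step, which lets us strip the internal weight $\Psi_1(|x-y|)/|x-y|^n$ and replace it by the constant $r^n/\Psi_1(r)$, after which \eqref{eq:28818-1} is exactly what is needed to remove $\Psi_1(r)$. I do not expect a real obstacle here; the only point deserving a word of care is confirming $\eta_{p_1,\Psi_1}f(r)<\infty$ for all $r>0$ (immediate from \eqref{eq:28818-1}) so that the first displayed estimate makes sense, and checking that the exponents balance in the final combination when $p_2<p_1$.
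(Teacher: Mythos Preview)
Your proposal is correct and uses exactly the same ingredients as the paper's proof: condition \eqref{eq 1.3} to replace the internal weight $\Psi_1(|x-y|)/|x-y|^n$ by $\Psi_1(r)/r^n$, the definition of $\eta_{p_1,\Psi_1}f$, the hypothesis \eqref{eq:28818-1}, and H\"older's inequality to pass from $p_1$ to $p_2$. The only cosmetic difference is the order of operations---the paper applies H\"older first and then invokes \eqref{eq 1.3} inside the $L^{p_1}$ integral, whereas you first bound the $L^{p_1}$ integral via \eqref{eq 1.3} and then apply H\"older---but the two computations are line-for-line equivalent.
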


\begin{proof}
Let $a\in \mathbb{R}^n$ and $r>0$. Then, by H\"older's inequality, we have
\begin{align*}
\int_{B(a,r)} |f(x)|^{p_2} \ dx
&\le
c\,r^{n\left( 1-\frac{p_2}{p_1}\right)}
\left(
\int_{B(a,r)} |f(x)|^{p_1} \ dx
\right)^{\frac{p_2}{p_1}}
\\
&=
\frac{c\,r^n}{\Psi_{1}(r)^{\frac{p_2}{p_1}}}
\left(
\int_{B(a,r)} \frac{|f(x)|^{p_1} \Psi_1(r)}{r^n} \ dx
\right)^{\frac{p_2}{p_1}}.
\end{align*}
We combine \eqref{eq 1.3}, \eqref{eq:28818-1}, and Definition \ref{D1} to obtain
\begin{align*}
\int_{B(a,r)} |f(x)|^{p_2} \ dx
&\le
\frac{c\,r^n}{\Psi_{1}(r)^{\frac{p_2}{p_1}}}
\left(
\int_{B(a,r)} \frac{|f(x)|^{p_1} \Psi_1(|x-a|)}{|x-a|^n} \ dx
\right)^{\frac{p_2}{p_1}}
\\
&\le
\frac{c\,r^n}{\Psi_{1}(r)^{\frac{p_2}{p_1}}}
[\eta_{p_1,\Psi_{1}}f(r)]^{p_2}
\le
c\,r^{n}\Psi_2(r)^{p_2}.
\end{align*}
Consequently,
\begin{align*}
\frac{1}{|B(a,r)|\Psi_{2}(r)}
\left(
\int_{B(a,r)} |f(x)|^{p_2} \ dx
\right)^{\frac{1}{p_2}}
\le c.
\end{align*}
Since $a$ and $r$ are arbitrary, we conclude that $f\in L^{p_2, \Psi_{2}}$.
\end{proof}

Taking $\Psi_1(t):=t^{\alpha}$ and $\Psi_{2}(t):=t^{\frac{\sigma}{p_2}-\frac{\alpha}{p_1}}$
where $0<\alpha<n$, $1\le p_2\le p_1<\infty$, and $0<\sigma <\frac{\alpha p_2}{p_1}$, we get
the following corollary.

\begin{cor}\label{cor:280818-1}
Let $1\le p_2\le p_1<\infty$ and $0<\alpha<n$. If $f\in S_{p_1,\alpha}$ and $\eta_{p_1,\alpha}f(r)
\le cr^{\frac{\sigma}{p_2}}$ for some $0<\sigma<\frac{\alpha p_2}{p_1}$ and for every $r>0$, then
$f\in L^{p_2, n+\sigma-\frac{\alpha p_2}{p_1}}$.
\end{cor}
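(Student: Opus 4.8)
The plan is to deduce Corollary \ref{cor:280818-1} directly from Theorem \ref{th:28818-1} by specializing the functions $\Psi_1$ and $\Psi_2$, so that essentially no new work is required beyond a bookkeeping check that the hypotheses of the theorem are met and that its conclusion collapses to the stated Morrey-space membership. Concretely, I would set $\Psi_1(t):=t^{\alpha}$ and $\Psi_2(t):=t^{\frac{\sigma}{p_2}-\frac{\alpha}{p_1}}$, with the standing assumptions $0<\alpha<n$, $1\le p_2\le p_1<\infty$, and $0<\sigma<\frac{\alpha p_2}{p_1}$.

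The first step is to verify that $\Psi_1(t)=t^{\alpha}$ satisfies condition \eqref{eq 1.3}, i.e.\ that $t\mapsto \Psi_1(t)/t^{n}=t^{\alpha-n}$ is nonincreasing; this is immediate since $\alpha<n$, so one may take $A_2=1$. The second step is to translate the hypothesis $\eta_{p_1,\alpha}f(r)\le c\,r^{\sigma/p_2}$ into the form \eqref{eq:28818-1} required by Theorem \ref{th:28818-1}. With $\Psi_1(r)^{1/p_1}=r^{\alpha/p_1}$ and $\Psi_2(r)=r^{\sigma/p_2-\alpha/p_1}$, the product $\Psi_1(r)^{1/p_1}\Psi_2(r)$ equals $r^{\alpha/p_1}\cdot r^{\sigma/p_2-\alpha/p_1}=r^{\sigma/p_2}$, so the assumption $\eta_{p_1,\alpha}f(r)\le c\,r^{\sigma/p_2}$ is exactly \eqref{eq:28818-1} for this choice of $\Psi_2$. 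Thus all hypotheses of Theorem \ref{th:28818-1} hold.

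The third step is to apply Theorem \ref{th:28818-1}, which yields $f\in L^{p_2,\Psi_2}$. The final step is to identify the space $L^{p_2,\Psi_2}$: recalling from the introduction that $L^{p,\Psi}=L^{p,\lambda}$ when $\Psi(t)=t^{(\lambda-n)/p}$, I would solve $\frac{\lambda-n}{p_2}=\frac{\sigma}{p_2}-\frac{\alpha}{p_1}$ for $\lambda$, obtaining $\lambda=n+\sigma-\frac{\alpha p_2}{p_1}$. One should also check that this $\lambda$ lies in the admissible range $0\le\lambda\le n$: since $0<\sigma<\frac{\alpha p_2}{p_1}$ we have $n+\sigma-\frac{\alpha p_2}{p_1}<n$ and $n+\sigma-\frac{\alpha p_2}{p_1}>n-\frac{\alpha p_2}{p_1}>n-\frac{n p_2}{p_1}\ge 0$, using $\alpha<n$ and $p_2\le p_1$. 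Hence $f\in L^{p_2,\,n+\sigma-\alpha p_2/p_1}$, as claimed.

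I do not anticipate a genuine obstacle here; the only point requiring a moment's care is the algebraic identification of the exponent $\lambda$ and the verification that it falls within $[0,n]$ so that the notation $L^{p_2,\lambda}$ is meaningful. Everything else is a direct substitution into the previously established theorem.
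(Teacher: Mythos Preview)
Your proposal is correct and follows exactly the paper's approach: the paper derives the corollary from Theorem~\ref{th:28818-1} by taking $\Psi_1(t)=t^{\alpha}$ and $\Psi_2(t)=t^{\sigma/p_2-\alpha/p_1}$, and your write-up simply fills in the routine verifications (condition~\eqref{eq 1.3}, the identity $\Psi_1(r)^{1/p_1}\Psi_2(r)=r^{\sigma/p_2}$, and the identification $\lambda=n+\sigma-\alpha p_2/p_1\in[0,n]$).
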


Next, we are going to investigate the relation between generalized Stummel classes and
generalized weak Morrey spaces. The generalized weak Morrey spaces are defined as follows.

\begin{defi}
Let $1\le p<\infty$ and $\Psi:(0,\infty) \to (0,\infty)$.
The \textbf{generalized weak Morrey space} $wL^{p, \Psi}=wL^{p, \Psi}(\R^n)$
is defined to be the set of all measurable functions $f$ for which
\[
\|f\|_{wL^{p,\Psi}}
:=
\sup_{a\in \R^n,\,r>0,\, t>0}
\frac{t|\{x\in B(a,r): |f(x)|>t\}|^{1/p}}{\Psi(r)|B(a,r)|^{1/p}}<\infty
\]
\end{defi}

The inclusion between generalized Stummel Classes and generalized weak Morrey spaces
is given in the following theorems.

\begin{theorem}\label{th-160618}
Let $1\le p_2<p_1<\infty$. Assume that $\Psi_1$ satisfies \eqref{eq 1.2} and that
$\Psi_2$ satisfies \eqref{eq 1.4}. If
\[
\int_{0}^1 \frac{\Psi_1(t)^{p_2}\Psi_2(t) }{t} \ dt<\infty,
\]
then $wL^{p_1, \Psi_1} \subseteq S_{p_2,\Psi_2}$.
\end{theorem}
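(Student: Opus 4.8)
The plan is to mimic the proof of Theorem~\ref{th:23518-1}, replacing the strong-type Hölder estimate on each dyadic annulus by the corresponding weak-type estimate, and then summing over the annuli exactly as before. Fix $f\in wL^{p_1,\Psi_1}$, a point $x\in\R^n$, and $r>0$. As in \eqref{eq:23518-1}, I would decompose the ball $B(x,r)$ into the dyadic annuli $\{2^kr\le|x-y|<2^{k+1}r\}$ with $k\le -1$, and on each annulus use \eqref{eq 1.4} to pull $\Psi_2(|x-y|)$ out as $c\,\Psi_2(2^kr)$, obtaining
\[
\int_{|x-y|<r}\frac{|f(y)|^{p_2}\Psi_2(|x-y|)}{|x-y|^n}\,dy
\le c\sum_{k=-\infty}^{-1}\frac{\Psi_2(2^kr)}{|B(x,2^{k+1}r)|}\int_{B(x,2^{k+1}r)}|f(y)|^{p_2}\,dy.
\]

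The key new ingredient is controlling $\int_{B(x,R)}|f(y)|^{p_2}\,dy$ for $f$ in a weak space when $p_2<p_1$. This is the standard fact that a weak-$L^{p_1}$ function is locally in $L^{p_2}$ for $p_2<p_1$: writing $E_t=\{y\in B(x,R):|f(y)|>t\}$ and using the layer-cake formula $\int_{B(x,R)}|f|^{p_2}=p_2\int_0^\infty t^{p_2-1}|E_t|\,dt$, I split the $t$-integral at a threshold (to be chosen as a power of $\|f\|_{wL^{p_1}(B(x,R))}/|B(x,R)|^{1/p_1}$, i.e. the natural scale coming from the weak norm) and bound $|E_t|$ by $|B(x,R)|$ for small $t$ and by $(\|f\|_{wL^{p_1}(B(x,R))}/t)^{p_1}$ for large $t$; the second integral converges precisely because $p_2<p_1$. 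This yields
\[
\int_{B(x,R)}|f(y)|^{p_2}\,dy\le c(p_1,p_2)\,|B(x,R)|^{1-\frac{p_2}{p_1}}\,\|f\|_{wL^{p_1}(B(x,R))}^{p_2},
\]
and then the definition of $\|f\|_{wL^{p_1,\Psi_1}}$ gives $\|f\|_{wL^{p_1}(B(x,R))}\le\|f\|_{wL^{p_1,\Psi_1}}\Psi_1(R)|B(x,R)|^{1/p_1}$, so that $|B(x,R)|^{-p_2/p_1}\int_{B(x,R)}|f|^{p_2}\le c\,\|f\|_{wL^{p_1,\Psi_1}}^{p_2}\Psi_1(R)^{p_2}$.

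Plugging this into the dyadic sum with $R=2^{k+1}r$ produces exactly
\[
\int_{|x-y|<r}\frac{|f(y)|^{p_2}\Psi_2(|x-y|)}{|x-y|^n}\,dy
\le c\,\|f\|_{wL^{p_1,\Psi_1}}^{p_2}\sum_{k=-\infty}^{-1}\Psi_1(2^{k+1}r)^{p_2}\Psi_2(2^kr),
\]
which is the same right-hand side as in \eqref{eq:23518-1}. From here the argument is identical to that of Theorem~\ref{th:23518-1}: inequality \eqref{eq:23518-2} (using \eqref{eq 1.4}, which gives the doubling-type control $\Psi_1(2^{k+1}r)\le c\,\Psi_1(t)$ for $t\in[2^{k-1}r,2^kr]$ together with monotonicity of $\Psi_1$ — note $\Psi_1$ is monotone as a consequence of \eqref{eq 1.2} in the way used there) bounds the sum by $c\int_0^{r/2}\Psi_1(t)^{p_2}\Psi_2(t)/t\,dt$, hence
\[
\eta_{p_2,\Psi_2}f(r)\le c\left(\int_0^{r/2}\frac{\Psi_1(t)^{p_2}\Psi_2(t)}{t}\,dt\right)^{1/p_2}\|f\|_{wL^{p_1,\Psi_1}},
\]
and the hypothesis $\int_0^1\Psi_1(t)^{p_2}\Psi_2(t)/t\,dt<\infty$ forces the right side to $0$ as $r\to0^+$, so $\eta_{p_2,\Psi_2}f(r)\searrow 0$ and $f\in S_{p_2,\Psi_2}$. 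The main obstacle — really the only point that is not a line-by-line repetition of Theorem~\ref{th:23518-1} — is the weak-to-strong local estimate in the previous paragraph, and in particular making sure the implied constant there depends only on $p_1,p_2$ (not on the ball), so that it can be absorbed harmlessly into the $c$'s; the strict inequality $p_2<p_1$ is exactly what is needed for that integral to converge, which is why this theorem, unlike Theorem~\ref{th:23518-1}, excludes the case $p_1=p_2$.
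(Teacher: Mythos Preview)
Your proof is correct. The paper takes a more modular route: it cites the embedding $wL^{p_1,\Psi_1}\subseteq L^{p_2,\Psi_1}$ from \cite[Theorem~5.1]{GHLM} (valid precisely because $p_2<p_1$) and then simply invokes Theorem~\ref{th:23518-1} with the pair $(p_2,p_2)$ in place of $(p_1,p_2)$. Your argument is the same in substance but unpacked: the layer-cake estimate you establish,
\[
\int_{B(x,R)}|f|^{p_2}\,dy\le c(p_1,p_2)\,|B(x,R)|^{1-p_2/p_1}\,\|f\|_{wL^{p_1}(B(x,R))}^{p_2},
\]
is exactly the local content of that embedding, and once you have it you rerun the dyadic computation of Theorem~\ref{th:23518-1} line by line. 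The paper's version is shorter and more modular (two inclusions, one line each); yours is self-contained and makes visible where the strict inequality $p_2<p_1$ enters.

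One minor slip: where you write ``using \eqref{eq 1.4}, which gives the doubling-type control $\Psi_1(2^{k+1}r)\le c\,\Psi_1(t)$,'' you mean \eqref{eq 1.2} --- the full doubling condition on $\Psi_1$ --- not \eqref{eq 1.4}, which is the right-doubling condition assumed on $\Psi_2$. Also, \eqref{eq 1.2} does not imply monotonicity of $\Psi_1$; what it actually gives is $\Psi_1(2^{k+1}r)\le A_1^2\,\Psi_1(t)$ for $t\in[2^{k-1}r,2^kr]$ by iterating the doubling bound, and that is all \eqref{eq:23518-2} requires. This does not affect the validity of your argument.
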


\begin{proof}
Since $p_2<p_1$, by virtue of \cite[Theorem 5.1]{GHLM}, we have
$wL^{p_1,\Psi_1}\subseteq L^{p_2, \Psi_1}$.
By Theorem \ref{th:23518-1}, we have $L^{p_2, \Psi_1}
\subseteq S_{p_2,\Psi_2}$. It thus follows that $wL^{p_1,\Psi_1}
\subseteq S_{p_2,\Psi_2}$.
\end{proof}

\begin{theorem}\label{th:28818-2}
Let $1\le p_1\le p_2<\infty$ and assume that $\Psi_1$ satisfies
\eqref{eq 1.3}. If $f\in S_{p_1,\Psi_1}$ and the inequality
\eqref{eq:28818-1} holds
for some $\Psi_{2}$ and for every $r>0$, then $f\in wL^{p_2, \Psi_2}$.
\end{theorem}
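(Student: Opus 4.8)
The plan is to reduce Theorem \ref{th:28818-2} to the already-established Theorem \ref{th:28818-1} together with a standard embedding between Morrey spaces and weak Morrey spaces. Indeed, the hypotheses of Theorem \ref{th:28818-2} are exactly those of Theorem \ref{th:28818-1}, except that here we assume $1\le p_1\le p_2<\infty$ rather than $1\le p_2\le p_1<\infty$. So the first step is to apply Theorem \ref{th:28818-1} in the borderline case $p_2=p_1$: since $f\in S_{p_1,\Psi_1}$, $\Psi_1$ satisfies \eqref{eq 1.3}, and \eqref{eq:28818-1} holds, we conclude that $f\in L^{p_1,\Psi_2}$.

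Next I would invoke the elementary inclusion $L^{p_1,\Psi_2}\subseteq wL^{p_2,\Psi_2}$ valid whenever $p_1\le p_2$. This is the analogue, for generalized Morrey and weak Morrey spaces, of the classical fact $L^p\subseteq wL^p$ combined with the decrease of $L^p$-averages in $p$ on a fixed ball; concretely, for $f\in L^{p_1,\Psi_2}$, $a\in\R^n$, $r>0$, and $t>0$, Chebyshev's inequality gives
\[
t^{p_1}\,|\{x\in B(a,r): |f(x)|>t\}| \le \int_{B(a,r)}|f(x)|^{p_1}\,dx \le \Psi_2(r)^{p_1}|B(a,r)|\,\|f\|_{L^{p_1,\Psi_2}}^{p_1},
\]
so that $t\,|\{x\in B(a,r): |f(x)|>t\}|^{1/p_1} \le \Psi_2(r)|B(a,r)|^{1/p_1}\|f\|_{L^{p_1,\Psi_2}}$; since $p_1\le p_2$ and the distribution set has finite measure bounded by $|B(a,r)|$, raising to the power $p_1/p_2$ and using $|B(a,r)|^{1/p_1}=|B(a,r)|^{1/p_2}\cdot|B(a,r)|^{1/p_1-1/p_2}$ together with $|\{\cdots\}|^{1/p_1-1/p_2}\le |B(a,r)|^{1/p_1-1/p_2}$ yields $\|f\|_{wL^{p_2,\Psi_2}}\le c\,\|f\|_{L^{p_1,\Psi_2}}$. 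Taking the supremum over $a,r,t$ shows $f\in wL^{p_2,\Psi_2}$, which completes the proof.

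I do not anticipate a genuine obstacle here; the only point requiring a little care is the embedding $L^{p_1,\Psi_2}\subseteq wL^{p_2,\Psi_2}$ when $p_1<p_2$, where one must exploit that the level set sits inside the fixed ball $B(a,r)$ in order to trade the exponent $p_1$ for $p_2$ at the cost of a power of $|B(a,r)|$ that is absorbed by the normalizing factor $|B(a,r)|^{1/p_2}$; alternatively one may cite a result such as \cite[Theorem 5.1]{GHLM} for this embedding, in the same spirit as the proof of Theorem \ref{th-160618}. Either way, the argument is short and entirely parallel to the proof of Theorem \ref{th-160618}, with the roles of "first pass to a Morrey space, then to the Stummel class" reversed to "first pass through the Stummel class, then to a weak Morrey space."
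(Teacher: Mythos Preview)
Your first step is fine: applying Theorem \ref{th:28818-1} at the diagonal $p_2=p_1$ yields $f\in L^{p_1,\Psi_2}$. The gap is in the second step. The embedding $L^{p_1,\Psi_2}\subseteq wL^{p_2,\Psi_2}$ for $p_1<p_2$ is \emph{false} in general, and the algebra you sketch goes the wrong way. From Chebyshev you have
\[
t\,|E|^{1/p_1}\le \Psi_2(r)\,|B|^{1/p_1}\,\|f\|_{L^{p_1,\Psi_2}},\qquad E=\{x\in B(a,r):|f(x)|>t\},\ B=B(a,r).
\]
To reach $t\,|E|^{1/p_2}$ you must \emph{divide} by $|E|^{1/p_1-1/p_2}$, which produces the factor $(|B|/|E|)^{1/p_1-1/p_2}$ on the right; since $1/p_1-1/p_2\ge 0$ and $|E|\le |B|$, this factor is $\ge 1$ and is unbounded as $|E|/|B|\to 0$. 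Your inequality $|E|^{1/p_1-1/p_2}\le |B|^{1/p_1-1/p_2}$ is correct but sits on the wrong side of the estimate. Concretely, in $\R$ with $\Psi_2(t)=t^{-1/2}$, $p_1=1$, $p_2=2$, one has $L^{1,\Psi_2}=L^{1,1/2}$ and $wL^{2,\Psi_2}=wL^2$; the functions $f_\epsilon=\epsilon^{-1/2}\sum_{k=1}^{N}\chi_{[k,k+\epsilon]}$ with $N=\lfloor 1/\epsilon\rfloor$ satisfy $\|f_\epsilon\|_{L^{1,1/2}}\le C$ uniformly in $\epsilon$ while $\|f_\epsilon\|_{wL^2}\sim \epsilon^{-1/2}$, so no such continuous embedding exists. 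Also, \cite[Theorem 5.1]{GHLM} (as invoked in Theorem \ref{th-160618}) gives the \emph{reverse} inclusion $wL^{p_1,\Psi}\subseteq L^{p_2,\Psi}$ for $p_2<p_1$, not the one you need here.

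The paper's own argument does not attempt any cross-exponent Morrey-to-weak-Morrey embedding: it simply invokes Theorem \ref{th:28818-1} and then the trivial inclusion $L^{p_2,\Psi_2}\subseteq wL^{p_2,\Psi_2}$ at the \emph{same} exponent $p_2$. Read literally, that application of Theorem \ref{th:28818-1} (to land in $L^{p_2,\Psi_2}$) already requires $p_2\le p_1$; so the intended hypothesis in the present theorem is evidently $1\le p_2\le p_1<\infty$, matching Theorem \ref{th:28818-1}, or else only the case $p_1=p_2$ is being asserted. In either reading no embedding $L^{p_1,\Psi_2}\subseteq wL^{p_2,\Psi_2}$ with $p_1<p_2$ is used or available.
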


\begin{proof}
The assertion follows from Theorem \ref{th:28818-1} and the inclusion
$L^{p_2, \Psi_2}\subseteq wL^{p_2, \Psi_2}$.
\end{proof}

For the classical weak Morrey spaces and Stummel classes, we have the following result.

\begin{theorem}\label{th_4.2}
For $1\le p_2<p_1<\infty$, if $0\le \lambda<n$ and $\frac{(n-\lambda)p_2}{p_1}<\alpha<n$,
then $wL^{p_1,\lambda} \subseteq S_{p_2, \alpha}$.
Conversely, for $1 \leq p < \infty $, if $f \in S_{p,\alpha}$ for $0<\alpha<n$ and
$\eta_{p,\alpha}f(r) \leq c r^{\frac{\sigma}{p}}$ for some $\sigma > 0$, then
$f \in wL^{p,n-\alpha+\sigma}$.
\end{theorem}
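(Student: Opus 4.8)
The plan is to treat the two halves of Theorem~\ref{th_4.2} as essentially corollaries of the machinery already built in this section, specializing the weight functions $\Psi$ to powers of $t$ and then checking the hypotheses.

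For the forward inclusion $wL^{p_1,\lambda}\subseteq S_{p_2,\alpha}$, the idea is to invoke Theorem~\ref{th-160618} with the choices $\Psi_1(t):=t^{\frac{\lambda-n}{p_1}}$ and $\Psi_2(t):=t^\alpha$, so that $wL^{p_1,\Psi_1}=wL^{p_1,\lambda}$ and $S_{p_2,\Psi_2}=S_{p_2,\alpha}$. The verification splits into three routine checks: first, $\Psi_1(t)=t^{(\lambda-n)/p_1}$ satisfies the doubling condition \eqref{eq 1.2} since it is a power function (the ratio $\Psi_1(s)/\Psi_1(r)=(s/r)^{(\lambda-n)/p_1}$ is bounded above and below when $1\le s/r\le 2$); second, $\Psi_2(t)=t^\alpha$ with $\alpha>0$ satisfies the right-doubling condition \eqref{eq 1.4} for the same reason; third, and most importantly, the integral condition
\[
\int_0^1 \frac{\Psi_1(t)^{p_2}\Psi_2(t)}{t}\,dt
=\int_0^1 t^{\frac{(\lambda-n)p_2}{p_1}+\alpha-1}\,dt
\]
converges precisely when $\frac{(\lambda-n)p_2}{p_1}+\alpha>0$, i.e.\ when $\alpha>\frac{(n-\lambda)p_2}{p_1}$, which is exactly the hypothesis. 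Then Theorem~\ref{th-160618} delivers the conclusion.

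For the converse, I would apply Corollary~\ref{cor:280818-1} (or directly Theorem~\ref{th:28818-1}) with $p_1=p_2=p$. Given $f\in S_{p,\alpha}$ with $\eta_{p,\alpha}f(r)\le cr^{\sigma/p}$, set $\Psi_1(t):=t^\alpha$ and $\Psi_2(t):=t^{\frac{\sigma}{p}-\frac{\alpha}{p}}$; one checks $\Psi_1$ satisfies \eqref{eq 1.3} (a power function with exponent $\alpha<n$ makes $t\mapsto \Psi_1(t)/t^n=t^{\alpha-n}$ nonincreasing, which is \eqref{eq 1.3}), and that the hypothesis \eqref{eq:28818-1} reads $\eta_{p,\alpha}f(r)\le c\,\Psi_1(r)^{1/p}\Psi_2(r)=c\,r^{\alpha/p}\cdot r^{(\sigma-\alpha)/p}=c\,r^{\sigma/p}$, which holds by assumption. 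Theorem~\ref{th:28818-1} then gives $f\in L^{p,\Psi_2}$ with $\Psi_2(t)=t^{(\sigma-\alpha)/p}$; since $\Psi_2(t)=t^{(\mu-n)/p}$ for $\mu:=n+\sigma-\alpha$, this is $f\in L^{p,n-\alpha+\sigma}$. Finally, the standard inclusion $L^{p,\lambda}\subseteq wL^{p,\lambda}$ (immediate from $t|\{y\in B:|f(y)|>t\}|^{1/p}\le \|f\|_{L^p(B)}$ by Chebyshev) upgrades this to $f\in wL^{p,n-\alpha+\sigma}$, as claimed.

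The only mild subtlety — not really an obstacle — is bookkeeping the exponents so that all the constraints line up: one must confirm that the choices of $\Psi_i$ genuinely fall within the admissible ranges of the cited theorems (e.g.\ $0<\alpha<n$ for $\Psi_1$ in the converse, and $\Psi_2(t)=t^\alpha$ being allowed as a target in Theorem~\ref{th-160618}, which only needs \eqref{eq 1.4} and not $\alpha<n$). I expect no genuine difficulty; the proof is a matter of substituting power weights and tracking the resulting inequalities among $p_1,p_2,\lambda,\alpha,\sigma,n$.
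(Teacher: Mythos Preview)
Your proposal is correct and follows essentially the same approach as the paper: for the first part you invoke Theorem~\ref{th-160618} with $\Psi_1(t)=t^{(\lambda-n)/p_1}$ and $\Psi_2(t)=t^\alpha$, and for the second you apply Corollary~\ref{cor:280818-1} (equivalently Theorem~\ref{th:28818-1}) with $p_1=p_2=p$ followed by the inclusion $L^{p,n-\alpha+\sigma}\subseteq wL^{p,n-\alpha+\sigma}$, exactly as the paper does. Your write-up simply supplies more detail in verifying the doubling and integrability hypotheses.
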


\begin{proof}
The first assertion follows from Theorem \ref{th-160618} by taking
$\Psi_1(t):=t^{\frac{\lambda-n}{p_1}}$, and $\Psi_2(t):=t^\alpha$ where
$0\le\lambda< n$ and $\frac{(n-\lambda)p_2}{p_1}<\alpha<n$.
The second part is a consequence of Corollary \ref{cor:280818-1} when $p_1=p_2=p$
and the inclusion $L^{p,n-\alpha+\sigma}\subseteq wL^{p,n-\alpha+\sigma}$.
\end{proof}

\begin{remark}
The second part of Theorem \ref{th_4.2} generalizes the result in \cite[p.~57]{RZ}.
For the case $ p = 1 $, the first part of Theorem \ref{th_4.2} does not generally hold.
To see this, consider the function $f(y) := |y|^{-n},\ y \in \R^{n}$.
Then $ f \in  wL^{1,\lambda} $ for $0\le\lambda<n$, but $ f \notin S_{\alpha} $
for $n-\lambda<\alpha<n$.
\end{remark}

\section{Further Results}

In this section, we study the relation between bounded Stummel modulus classes
$\tilde{S}_{p,\alpha}$ and Stummel classes. We also study the inclusion between
$\tilde{S}_{p,\alpha}$ and Lorentz spaces. For $0< \alpha <n$ and $1 \leq p < \infty$,
recall the definition of the Stummel modulus
\begin{equation*}
\eta_{p,\alpha}f(r):=\sup_{x \in \R^{n}} \left(
\int_{|x-y|<r} \frac{|f(y)|^p}{|x-y|^{n-\alpha}} \,dy
\right)^{\frac1p},\quad r>0.
\end{equation*}

\begin{defi}\label{Stumod}
For $0< \alpha <n$ and $1 \leq p < \infty$, we define \textbf{the bounded Stummel
modulus class} $\tilde{S}_{p,\alpha}=\tilde{S}_{p,\alpha}(\R^n)$ by
\begin{align*}
\tilde{S}_{p,\alpha} :=  \left\{ f\in L_{\rm loc}^{p}(\R^n): \eta_{p,\alpha}f(r) < \infty
\quad \text{for all} \quad r > 0 \right\}.
\end{align*}
\end{defi}

Note that the inclusions similar to Corollary \ref{cor 3.2} and Corollary \ref{cor 3.3}
also hold for $\tilde{S}_{p,\alpha}$. Moreover, we have $S_{p,\alpha} \subseteq
\tilde{S}_{p,\alpha}$. This inclusion is proper due to the following example which we
adapt from \cite[p. 250--251]{AS}.

\begin{example}
Let $ 0 < \alpha < n $ and $ 1 \leq p < \infty $. For every $ k \in \N $ with $ k \geq 3 $,
let $x_{k} :=(2^{-k}, 0, \dots,0 ) \in \R^{n} $ and
\begin{equation*}
V_{k}(y) := \begin{cases}
8^{ \alpha k}, & y \in B(x_{k},8^{-k}), \\
0, & y \notin B(x_{k},8^{-k}).
\end{cases}
\end{equation*}
Define $ V: \R^{n} \rightarrow \R $ by the formula
\begin{equation*}
V(y) := \left( \sum_{k=3}^{\infty} V_{k}(y) \right)^{\frac{1}{p}}.
\end{equation*}
Since
\begin{align*}
\int_{B(x,r)} |V(y)|^{p} \, dy
= \sum_{k=3}^{\infty } \int_{B(x,r)} |V_{k}(y)| \, dy
\leq c(n) \sum_{k=3}^{\infty } 8^{( \alpha -n)k} < \infty
\end{align*}
for every $ x \in \R^{n} $ and $ r>0 $ where $ c(n) := |B(0,1)| $, we obtain
$ V \in  L_{\rm loc}^{p}(\R^n) $.

We will show that $ V \in \tilde{S}_{p,\alpha}$. Let
\begin{equation*}
\rho_{k}(x) := \int_{\R^{n}} \frac{|V_{k}(y)|}{|x-y|^{n-\alpha}} \,dy
= 8^{\alpha k} \, \int_{|y-x_{k}|<8^{-k}} \frac{1}{|x-y|^{n-\alpha}} \,dy, \quad x \in \R^{n}.
\end{equation*}
There are two cases: (i) $ |x-x_{k}| \geq 2^{-2k+1} $, or, (ii) $ |x-x_{k}| < 2^{-2k+1}$.

Suppose that the case (i) holds, that is, $|x-x_{k}| \geq 2^{-2k+1}$. We have,
\begin{align}\label{eq 5.1}
\rho_{k}(x)
\leq c(n) 2^{(\alpha-n)k}.
\end{align}
For the case (ii) $ |x-x_{k}| < 2^{-2k+1}$, we have
\begin{align}\label{eq 5.2}
\rho_{k}(x) \leq c(n,\alpha)
\end{align}
where $ c(n,\alpha) := \max \{ c(n),\frac{3^{\alpha}}{\alpha} c(n)\} $.

Given $ x \in \R^{n} $, we have $ x \notin B(x_{k},2^{-2k+1}) $ for all $ k \geq 3 $,
or $ x \in B(x_{j},2^{-2j+1}) $ for some $ j \geq 3 $. Assume that $ x \notin B(x_{k},2^{-2k+1}) $
for all $ k \geq 3 $. Hence, from \eqref{eq 5.1}, we have
\begin{align}\label{eq 5.3}
\int_{\R^{n}} \frac{|V(y)|^{p}}{|x-y|^{n-\alpha}} \, dy \leq \sum_{k=3}^{\infty} \rho_{k}(x)
\leq c(n) \sum_{k=3}^{\infty} 2^{(\alpha-n)k} < \infty.
\end{align}
Now assume that $x \in B(x_{j},2^{-2j+1})$ for some $j \ge 3$. Since $\left\lbrace B(x_{k},2^{-2k+1})
\right\rbrace_{k \ge 3}$ is a disjoint collection, we find that there is only one $j \in \N$, $j \ge 3$,
such that $ x \in B(x_{j},2^{-2j+1}) $. Using \eqref{eq 5.1} and \eqref{eq 5.2}, we get
\begin{align}\label{eq 5.4}
\int_{\R^{n}} \frac{|V(y)|^{p}}{|x-y|^{n-\alpha}} \, dy
& \leq c(n,\alpha) + \sum_{\substack{k=3 \\ k \neq j}}^{\infty} \rho_{k}(x) \\
& \leq c(n,\alpha) + c(n) \sum_{\substack{k=3 \\ k \neq j}}^{\infty} 2^{(\alpha-n)k} < \infty. \nonumber
\end{align}
According to \eqref{eq 5.3} and \eqref{eq 5.4}, for every $ r > 0 $, we have
\begin{equation*}
\int_{|x-y|<r} \frac{|V(y)|^{p}}{|x-y|^{n-\alpha}} \, dy
\leq \int_{\R^{n}} \frac{|V(y)|^{p}}{|x-y|^{n-\alpha}} \, dy < \infty.
\end{equation*}
Therefore $ \eta_{p,\alpha}V(r) < \infty $, and we conclude that $ V \in \tilde{S}_{p,\alpha}$.

Now, we will show that $ V \notin S_{p,\alpha}$. Let $r > 0$. By Archimedean property,
there is $ k \geq 3 $ such that $ 8^{-k} < r $. Note that
\begin{align*}
\left( \eta_{p,\alpha}V(r)\right)^{p}
& \ge \int_{|y-x_{k}|<r} \frac{|V(y)|^{p}}{|y-x_{k}|^{n-\alpha}} \,dy \\
& \geq \int_{|y-x_{k}|<r} \frac{|V_{k}(y)|}{|y-x_{k}|^{n-\alpha}} \,dy \\
& \geq 8^{\alpha k} \, \int_{|y-x_{k}|<8^{-k}} \frac{1}{|y-x_{k}|^{n-\alpha}} \,dy
= \frac{c(n)}{\alpha}.
\end{align*}
This shows that $\eta_{p,\alpha}V$ stays away from zero. Thus $V \notin S_{p,\alpha}$. \qed
\end{example}

Given a measurable function $ f: \R^{n} \rightarrow \R $, consider the
\textit{distribution function} $ D_{f} $ of $ f $ which is given by
\begin{align*}
D_{f}(\sigma) := \left| \left\lbrace x \in \R^{n} : |f(x)| > \sigma \right\rbrace  \right|,
\quad \sigma > 0.
\end{align*}
The \textit{decreasing rearrangement} of $f$ is the function $f^*$ defined on $[0, \infty)$ by
\begin{align*}
f^*(t) := \inf \left\lbrace \sigma : D_{f}(\sigma) \leq t  \right\rbrace, \quad t \geq 0.
\end{align*}

\begin{defi}
Let $ 0 <\kappa ,p \leq \infty $. The \textbf{Lorentz space} $ L_{\kappa}^p=L_{\kappa}^p(\R^n)$
is the collection of all measurable functions $ f: \R^n \rightarrow \R $ satisfying
$ \| f \|_{L_{\kappa}^p} < \infty $, where
\begin{equation*}
\| f \|_{L_{\kappa}^p} :=
\begin{cases}
\left( \int_{0}^{\infty} \left( t^{\frac{1}{\kappa}} f^*(t) \right)^{p} \frac{dt}{t}
\right)^{\frac{1}{p}}, &\quad \text{if}\ \ p < \infty, \\
\sup\limits_{ t > 0} t^{\frac{1}{\kappa}}f^*(t), &\quad \text{if}\ \ p = \infty.
\end{cases}
\end{equation*}
\end{defi}
Note that $L^{\infty}_\kappa=wL^{\kappa}$ for $\kappa\ge1$.
The following lemma is a well-known inclusion relation between Lorentz spaces
(see \cite[p.~49]{Gra} or \cite[p.~305]{Kuf} for its proof).

\begin{lemma}\label{lem 5.4}
If $ 0 < \kappa \leq \infty $ and $ 0 < p_{2} \le p_{1}\leq \infty $, then
$ L_{\kappa}^{p_2} \subseteq L_{\kappa}^{p_1} $.
\end{lemma}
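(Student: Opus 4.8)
The plan is to establish the embedding $L^{p_2}_\kappa \subseteq L^{p_1}_\kappa$ for $0<\kappa\le\infty$ and $0<p_2\le p_1\le\infty$ by a direct estimate on the quasi-norm of a function $f\in L^{p_2}_\kappa$, exploiting the fact that $t\mapsto t^{1/\kappa}f^*(t)$ is controlled pointwise once one has a bound on its $L^{p_2}((0,\infty),dt/t)$ quasi-norm. First I would dispose of the endpoint cases: if $p_1=\infty$, one needs $\sup_{t>0}t^{1/\kappa}f^*(t)\le c\,\|f\|_{L^{p_2}_\kappa}$, and if $p_2=\infty$ as well there is nothing to prove. So the main content is the case $p_2<p_1<\infty$, with $p_1=\infty$ treated as a limiting variant.

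The key step is the following pointwise bound for $f\in L^{p_2}_\kappa$: for every fixed $t>0$,
\begin{align*}
\left(t^{\frac1\kappa}f^*(t)\right)^{p_2}
&= \frac{p_2}{\kappa}\int_0^t \left(s^{\frac1\kappa}f^*(t)\right)^{p_2}\frac{ds}{s}
\le \frac{p_2}{\kappa}\int_0^t \left(s^{\frac1\kappa}f^*(s)\right)^{p_2}\frac{ds}{s}
\le \frac{p_2}{\kappa}\,\|f\|_{L^{p_2}_\kappa}^{p_2},
\end{align*}
where in the first inequality I use that $f^*$ is nonincreasing, so $f^*(t)\le f^*(s)$ for $s\le t$. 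Raising to the power $1/p_2$ gives $t^{1/\kappa}f^*(t)\le (p_2/\kappa)^{1/p_2}\|f\|_{L^{p_2}_\kappa}$ for all $t>0$; this already settles the case $p_1=\infty$.

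For $p_2<p_1<\infty$, I would then split the exponent $p_1=p_2+(p_1-p_2)$ and write
\begin{align*}
\|f\|_{L^{p_1}_\kappa}^{p_1}
= \int_0^\infty \left(t^{\frac1\kappa}f^*(t)\right)^{p_1}\frac{dt}{t}
= \int_0^\infty \left(t^{\frac1\kappa}f^*(t)\right)^{p_1-p_2}\left(t^{\frac1\kappa}f^*(t)\right)^{p_2}\frac{dt}{t},
\end{align*}
and bound the factor $\big(t^{1/\kappa}f^*(t)\big)^{p_1-p_2}$ by the pointwise estimate above, namely by $\big((p_2/\kappa)^{1/p_2}\|f\|_{L^{p_2}_\kappa}\big)^{p_1-p_2}$. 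Pulling this constant out of the integral leaves $\int_0^\infty\big(t^{1/\kappa}f^*(t)\big)^{p_2}\frac{dt}{t}=\|f\|_{L^{p_2}_\kappa}^{p_2}$, so altogether $\|f\|_{L^{p_1}_\kappa}^{p_1}\le (p_2/\kappa)^{(p_1-p_2)/p_2}\|f\|_{L^{p_2}_\kappa}^{p_1}$, giving the desired inclusion with an explicit constant. I do not anticipate a serious obstacle here — the only points requiring care are the monotonicity of $f^*$ (which is standard) and keeping track of the exponents so that the splitting $p_1-p_2>0$ is used in the right place; the argument is a textbook one, which is consistent with the lemma being quoted as well-known.
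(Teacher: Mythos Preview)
The paper does not supply its own proof of this lemma: it simply cites \cite[p.~49]{Gra} and \cite[p.~305]{Kuf} and moves on. Your argument is precisely the standard textbook proof (and in fact matches the one in Grafakos): first bound $\sup_{t>0}t^{1/\kappa}f^*(t)$ via monotonicity of $f^*$, then split the exponent $p_1=(p_1-p_2)+p_2$ to control the $L^{p_1}_\kappa$ quasi-norm by the $L^{p_2}_\kappa$ one. So there is nothing to compare; your write-up is exactly what the cited references contain.

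One small point of care: your identity
\[
\bigl(t^{1/\kappa}f^*(t)\bigr)^{p_2}=\frac{p_2}{\kappa}\int_0^t\bigl(s^{1/\kappa}f^*(t)\bigr)^{p_2}\,\frac{ds}{s}
\]
tacitly assumes $\kappa<\infty$ (otherwise $p_2/\kappa$ is not available and the integral on the right diverges). For $\kappa=\infty$ the space $L^{p}_\infty$ with $p<\infty$ is degenerate under the definition used in the paper, so the inclusion is trivial there; it would be worth a one-line remark disposing of this endpoint explicitly.
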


Moreover, we have the following relation between Lorentz spaces and
bounded Stummel modulus classes.

\begin{lemma}\label{le:Castillo}\cite[Lemma 2.7]{CRR}
Let $0<\alpha <n$. Then $L^{1}_{\frac{n}{\alpha}} \subseteq \tilde{S}_{1,\alpha}$. 	
\end{lemma}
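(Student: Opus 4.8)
The plan is to unwind Definition~\ref{Stumod}: given $f\in L^{1}_{\frac{n}{\alpha}}$ we must show that for every $r>0$,
\[
\eta_{1,\alpha}f(r)=\sup_{x\in\R^{n}}\int_{|x-y|<r}\frac{|f(y)|}{|x-y|^{n-\alpha}}\,dy<\infty .
\]
Fix $x\in\R^{n}$ and $r>0$ and put $g_{x}(y):=|x-y|^{\alpha-n}\,\chi_{B(x,r)}(y)$, so that the inner integral is exactly $\int_{\R^{n}}|f(y)|\,g_{x}(y)\,dy$. First I would compute the decreasing rearrangement of $g_{x}$. Since $g_{x}$ is equimeasurable with the radially decreasing function $z\mapsto|z|^{\alpha-n}\chi_{B(0,r)}(z)$, a direct computation of its distribution function gives
\[
g_{x}^{*}(t)=\omega_{n}^{\frac{n-\alpha}{n}}\,t^{-\frac{n-\alpha}{n}}\quad(0<t<\omega_{n}r^{n}),\qquad g_{x}^{*}(t)=0\quad(t\ge\omega_{n}r^{n}),
\]
where $\omega_{n}:=|B(0,1)|$; in particular $g_{x}^{*}$ does not depend on $x$.

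Next I would invoke the Hardy--Littlewood rearrangement inequality $\int_{\R^{n}}|f(y)|\,g_{x}(y)\,dy\le\int_{0}^{\infty}f^{*}(t)\,g_{x}^{*}(t)\,dt$, which yields
\[
\int_{|x-y|<r}\frac{|f(y)|}{|x-y|^{n-\alpha}}\,dy\ \le\ \omega_{n}^{\frac{n-\alpha}{n}}\int_{0}^{\omega_{n}r^{n}}f^{*}(t)\,t^{-\frac{n-\alpha}{n}}\,dt .
\]
The key observation is that $t^{-\frac{n-\alpha}{n}}=t^{\frac{\alpha}{n}}\,t^{-1}=t^{1/(n/\alpha)}\,t^{-1}$, so the last integral is dominated by $\int_{0}^{\infty}f^{*}(t)\,t^{1/(n/\alpha)}\,\frac{dt}{t}=\|f\|_{L^{1}_{n/\alpha}}$, which is finite by hypothesis. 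Hence $\eta_{1,\alpha}f(r)\le\omega_{n}^{\frac{n-\alpha}{n}}\|f\|_{L^{1}_{n/\alpha}}<\infty$ for every $r>0$, so $f\in\tilde{S}_{1,\alpha}$.

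The only points needing care are the distribution-function computation and the fact that the resulting bound is \emph{uniform} in $x$ (and even in $r$); the latter is what actually makes the supremum defining $\eta_{1,\alpha}f(r)$ finite, and it is immediate here because $g_{x}^{*}$ is independent of $x$. An equivalent route is to note that $g_{x}$ belongs to $L^{\infty}_{n/(n-\alpha)}=wL^{n/(n-\alpha)}$ with norm independent of $x$, that $\tfrac{\alpha}{n}+\tfrac{n-\alpha}{n}=1$, and to apply the endpoint Hölder inequality $L^{1}_{n/\alpha}\times L^{\infty}_{n/(n-\alpha)}\to L^{1}$; the rearrangement argument above is simply a self-contained proof of this endpoint estimate in the case at hand. (Lemma~\ref{lem 5.4} indicates why $L^{1}_{n/\alpha}$, being the smallest Lorentz space with first index $n/\alpha$, is the natural hypothesis here.)
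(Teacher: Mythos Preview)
Your argument is correct. The paper itself does not prove this lemma; it simply quotes it as \cite[Lemma~2.7]{CRR}, so there is no in-paper proof to compare against. What you have written---compute $g_x^{*}$, apply the Hardy--Littlewood rearrangement inequality, and recognize the resulting integral as $\|f\|_{L^{1}_{n/\alpha}}$---is the standard proof, and your alternative phrasing via the endpoint Lorentz--H\"older inequality $L^{1}_{n/\alpha}\times L^{\infty}_{n/(n-\alpha)}\to L^{1}$ is exactly equivalent. The distribution computation for $g_x$ is correct, and you rightly emphasize that $g_x^{*}$ is independent of $x$, which gives the uniform bound needed for the supremum in $\eta_{1,\alpha}f(r)$.

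One minor point worth making explicit: Definition~\ref{Stumod} requires $f\in L^{1}_{\rm loc}(\R^n)$, and you should note that this follows from your estimate, since $|x-y|^{\alpha-n}\ge r^{\alpha-n}$ on $B(x,r)$ gives $\int_{B(x,r)}|f(y)|\,dy\le r^{n-\alpha}\int_{B(x,r)}|f(y)|\,|x-y|^{\alpha-n}\,dy<\infty$. This is immediate from what you already have, but strictly speaking it is part of the membership in $\tilde{S}_{1,\alpha}$.
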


Our theorem below is an extension of Lemma \ref{le:Castillo}.

\begin{theorem}\label{thm5.6}
Let $1\le p<\infty$ and $0<\alpha<n$. If $\frac{np}{\alpha}\le \kappa<\infty$,
then
\[
L^{p}_{\kappa}
\subseteq
\tilde{S}_{p,\alpha}.
\]
\end{theorem}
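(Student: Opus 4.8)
The plan is to reduce to the case $\kappa=\frac{np}{\alpha}$ by means of Lemma \ref{lem 5.4}, and then prove the endpoint inclusion $L^p_{np/\alpha}\subseteq\tilde S_{p,\alpha}$ by a direct estimate of the Stummel modulus using the decreasing rearrangement. First I would observe that if $\frac{np}{\alpha}\le\kappa<\infty$, then by Lemma \ref{lem 5.4} (applied with $p_2=\frac{np}{\alpha}$ and $p_1=\kappa$) we have $L^p_{np/\alpha}\subseteq L^p_\kappa$; wait --- the inclusion in Lemma \ref{lem 5.4} goes the other way in the second index, so in fact $L^p_\kappa\subseteq L^p_{np/\alpha}$ precisely when $\kappa\ge\frac{np}{\alpha}$ is \emph{not} what Lemma \ref{lem 5.4} gives directly; rather Lemma \ref{lem 5.4} fixes the first Lorentz index. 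I would instead use the standard nesting $L^p_\kappa\subseteq L^\infty_\kappa=wL^\kappa$ together with a rearrangement bound, or more cleanly: it suffices to show $L^p_{np/\alpha}\subseteq\tilde S_{p,\alpha}$ and then note that for $\kappa\ge\frac{np}{\alpha}$ one has $L^p_\kappa\subseteq L^p_{np/\alpha}$ by Lemma \ref{lem 5.4} (indeed in that lemma the containment $L^{p_2}_\kappa\subseteq L^{p_1}_\kappa$ is for the \emph{second} index; here we want containment in the \emph{first} index, which holds in the reverse direction, so the correct reading is that larger $\kappa$ gives a smaller space). I would state this reduction carefully and then focus entirely on the endpoint case.

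For the endpoint case, fix $f\in L^p_{np/\alpha}$, $x\in\R^n$, and $r>0$. The key step is to bound
\[
\int_{|x-y|<r}\frac{|f(y)|^p}{|x-y|^{n-\alpha}}\,dy
\]
by passing to the distribution function. I would split the ball $B(x,r)$ dyadically into annuli $A_k=\{2^{-k-1}r\le|x-y|<2^{-k}r\}$ for $k\ge 0$, on which $|x-y|^{-(n-\alpha)}\le (2^{-k-1}r)^{-(n-\alpha)}$ and $|A_k|\le c(n)(2^{-k}r)^n$. On each annulus, bound $\int_{A_k}|f|^p$ using the layer-cake / rearrangement inequality $\int_{E}|f|^p\le \int_0^{|E|}(f^*(t))^p\,dt$ with $|E|=|A_k|\le c(n)(2^{-k}r)^n$. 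The contribution of the $k$-th annulus is then controlled by
\[
c(n)\,(2^{-k}r)^{\alpha-n}\int_0^{c(n)(2^{-k}r)^n}(f^*(t))^p\,dt.
\]
Summing over $k$ and comparing with the integral defining $\|f\|_{L^p_{np/\alpha}}^p=\int_0^\infty (t^{\alpha/(np)}f^*(t))^p\,\frac{dt}{t}$, the factor $(2^{-k}r)^{\alpha-n}$ against the upper limit $(2^{-k}r)^n$ produces exactly the weight $t^{\alpha/n-1}$ after a change of variables, so the dyadic sum telescopes into a constant multiple of $\|f\|_{L^p_{np/\alpha}}^p<\infty$. This shows $\eta_{p,\alpha}f(r)<\infty$ for every $r>0$, uniformly in $x$, which is exactly the defining condition of $\tilde S_{p,\alpha}$.

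The main obstacle I anticipate is the bookkeeping in the dyadic sum: one must verify that the geometric factors $(2^{-k}r)^{\alpha-n}$ and the rearrangement integrals over shrinking intervals $[0,c(n)(2^{-k}r)^n]$ combine to a \emph{convergent} sum rather than merely a finite one for each $k$, and that the resulting bound is independent of $x$ (which it is, since only $|A_k|$ enters, not the location of $x$). A cleaner route that avoids the annular decomposition is to use the rearrangement inequality directly on $B(x,r)$ together with the fact that $y\mapsto|x-y|^{-(n-\alpha)}$ on $B(x,r)$ has decreasing rearrangement comparable to $t\mapsto c(n)\,t^{\alpha/n-1}$ on $(0,|B(x,r)|)$; then the Hardy--Littlewood inequality $\int_{B(x,r)}|f(y)|^p|x-y|^{-(n-\alpha)}\,dy\le\int_0^{|B(x,r)|}(f^*(t))^p\,g^*(t)\,dt$ with $g^*(t)=c(n)t^{\alpha/n-1}$ yields the estimate in one line, matching $\|f\|_{L^p_{np/\alpha}}^p$ up to a dimensional constant. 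I would present this second approach as the main argument and relegate the dyadic version to a remark, since the rearrangement of the Riesz kernel is a standard and clean fact.
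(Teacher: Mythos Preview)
Your endpoint argument for $\kappa=\frac{np}{\alpha}$ via the Hardy--Littlewood rearrangement inequality is correct and clean; it is essentially a direct proof of what the paper obtains by quoting Lemma~\ref{le:Castillo} after observing that $f\in L^{p}_{np/\alpha}$ is equivalent to $|f|^p\in L^{1}_{n/\alpha}$.

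The genuine gap is in your reduction step. You settle on the claim that for $\kappa\ge\frac{np}{\alpha}$ one has $L^{p}_{\kappa}\subseteq L^{p}_{np/\alpha}$, asserting that ``larger $\kappa$ gives a smaller space''. This is false on $\R^n$: Lorentz spaces with different \emph{first} index $\kappa$ are not nested (neither direction holds globally). Concretely, $f(x)=|x|^{-\alpha}\chi_{\{|x|>1\}}$ belongs to $L^{1}_{\kappa}$ for every $\kappa>\frac{n}{\alpha}$ but not to $L^{1}_{n/\alpha}$, and the same example lifts to general $p$ by taking $p$-th roots. Lemma~\ref{lem 5.4} only gives nesting in the \emph{second} index with $\kappa$ fixed, as you yourself noticed before talking yourself out of it; there is no companion lemma for the first index, and none can exist over all of $\R^n$.

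The paper treats the case $\kappa>\frac{np}{\alpha}$ by a completely different route: since then $\kappa>p$, Lemma~\ref{lem 5.4} gives $L^{p}_{\kappa}\subseteq L^{\infty}_{\kappa}=wL^{\kappa}$, and Theorem~\ref{th_4.2} with $\lambda=0$ (which needs $p_2<p_1$, here $p<\kappa$) yields $wL^{\kappa}\subseteq S_{p,\alpha}\subseteq\tilde S_{p,\alpha}$. You actually brushed against this idea (``use the standard nesting $L^p_\kappa\subseteq L^\infty_\kappa=wL^\kappa$'') before abandoning it; that was the right thread to pull.
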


\begin{proof}
We first prove the case where $\kappa=\frac{np}{\alpha}$. Let $f \in L^{p}_{\frac{np}{\alpha}}$.
Then $|f|^{p}\in L^{1}_{\frac{n}{\alpha}}$. By virtue of Lemma \ref{le:Castillo}, we have
$|f|^{p} \in \tilde{S}_{1,\alpha}$. According to Definition \ref{Stumod}, we see that
$f\in \tilde{S}_{p,\alpha}$. Thus, we obtain $L^{p}_{\frac{np}{\alpha}} \subseteq \tilde{S}_{p,\alpha}$.

Let us now consider the case where $\kappa>\frac{np}{\alpha}$.
Since $0<\alpha<n$, we have $\kappa>p$.
Hence by Theorem \ref{th_4.2} (for $ \lambda = 0 $),
we obtain $wL^{\kappa}\subseteq S_{p, \alpha}$. Now, combining this with
Lemma \ref{lem 5.4} and the remark after Definition \ref{Stumod}, we see that
\[
L^{p}_{\kappa}\subseteq wL^{\kappa}\subseteq S_{p, \alpha}\subseteq
\tilde{S}_{p, \alpha}.
\]
This completes the proof.
\end{proof}

\begin{remark}
For $\frac{n}{\alpha}<\kappa<\infty$, we observe that $L^1_\kappa \not\subseteq L^1_{\frac{n}{\alpha}}$.
To see this, one may check that $f(x):=|x|^{-\alpha}\chi_{\{x\,:\,|x|>1\}} \in L^1_\kappa 
\setminus L^1_{\frac{n}{\alpha}}$.
\end{remark}

\begin{remark}
It follows from Theorem \ref{thm5.6} that, for $1\le p_2\le p_1<\infty$ and
$\frac{np_1}{\alpha}\le\kappa<\infty$, the inclusion
$L^{p_1}_{\kappa}\subseteq \tilde{S}_{p_2,\alpha}$ holds.
\end{remark}

\begin{remark}
By using the same trick as in the proof of the first part of Theorem  \ref{thm5.6}, one can extend \cite[Theorem 3.1]{G} to the corresponding function spaces with parameter $p$ instead of $1$.  
\end{remark}

\bigskip

\noindent{\bf Acknowledgement}. This research is supported by ITB Research \& Innovation
Program 2018.

\end{document}